\newcommand{\vast}{\bBigg@{3}}
\newcommand{\vastl}{\mathopen\vast}
\newcommand{\vastr}{\mathclose\vast}
\DeclarePairedDelimiter{\abs}{\lvert}{\rvert}
\DeclarePairedDelimiter{\ceil}{\lceil}{\rceil}
\newtheorem{theorem}{Theorem}
\newtheorem{corollary}[theorem]{Corollary}
\newtheorem{lemma}[theorem]{Lemma}
\newtheorem{proposition}[theorem]{Proposition}
\theoremstyle{remark}
\newtheorem{example}[theorem]{Example}
\numberwithin{theorem}{section}
\title[Rational numbers with odd greedy expansion of fixed length]{Rational numbers with odd greedy expansion\\ of fixed length}
\author{Joel Louwsma}
\address[J.~Louwsma]{Department of Mathematics, Niagara University, Niagara University, NY 14109, USA}
\email{jlouwsma@niagara.edu}
\author{Joseph Martino}
\address[J.~Martino]{Department of Mathematics, Niagara University, Niagara University, NY 14109, USA}
\email{jmartino2345@gmail.com}
\begin{document}

\begin{abstract}
Given a positive rational number $n/d$ with $d$ odd, its odd greedy expansion starts with the largest odd denominator unit fraction at most $n/d$, adds the largest odd denominator unit fraction so the sum is at most $n/d$, and continues as long as the sum is less than $n/d$. It is an open question whether this expansion always has finitely many terms. Given a fixed positive integer~$n$, we find all reduced fractions with numerator~$n$ whose odd greedy expansion has length~$2$. Given $m-1$ odd positive integers, we find all rational numbers whose odd greedy expansion has length~$m$ and begins with these numbers as denominators. Given $m-2$ compatible odd positive integers, we find an infinite family of rational numbers whose odd greedy expansion has length~$m$ and begins with these numbers as denominators. 
\end{abstract}

\maketitle

\section{Introduction}

This paper studies expansions of positive rational numbers as sums of unit fractions with odd denominators. A \emph{unit fraction} is a fraction of the form $1/x$, where $x$ is a positive integer. The ancient Egyptians wrote rational numbers as sums of distinct unit fractions; for more about this history, see~\cite{RS}. Every positive rational number can be written as such a sum. One way to see this is by using the \emph{greedy algorithm} described by Fibonacci in his manuscript \emph{Liber Abaci} (see \cite[Chapter~7]{Fibonacci}). This algorithm begins with the largest unit fraction less than or equal to the initial rational number, adds the largest unit fraction so that the sum is less than or equal to the initial rational number, and continues until the sum equals the rational number. It always terminates and produces the \emph{greedy Egyptian expansion} of the rational number. For example, $7/15=1/3+1/8+1/120$. This algorithm was rediscovered by Sylvester~\cite{Sylvester} and others.

We study the related \emph{odd greedy algorithm}, which begins with a positive fraction with odd denominator and uses only unit fractions with odd denominators. The expansion it gives is called the \emph{odd greedy expansion} of the initial rational number. For example, $7/15=1/3+1/9+1/45$. Starke~\cite{Starke} proposed showing that every rational number with odd denominator is the sum of finitely many distinct unit fractions with odd denominators at least~$3$, and Stewart~\cite{Stewart} and independently Breusch~\cite{Breusch} did this, but neither of their proofs use odd greedy expansions. Indeed, whether the odd greedy expansion always has finitely many terms is a well-known open problem recorded by Guy \cite[Section~D11]{Guy2004} and Klee--Wagon \cite[Problem~15]{KW}. Eppstein~\cite{Eppstein} gives a heuristic argument for why the answer is likely to be positive. Wagon \cite[Section~15.2]{Wagon} gives several examples, due both to himself and others, of fractions whose odd greedy expansion has many terms and very large denominators. Brown~\cite{Brown} shows how to construct fractions whose odd greedy expansions have arbitrarily many terms. Pihko~\cite{Pihko2010} shows how to construct fractions for which the sequence of numerators of remainders under the odd greedy algorithm grows by~$1$ for arbitrarily many steps. 

One can define an odd greedy expansion by either allowing or prohibiting the term $1/1$ and either allowing or prohibiting repetition of terms; there is some variation in the literature in these regards. We choose to permit the term $1/1$ and permit repetition of terms. Precisely, given a positive rational number $n/d$, we construct $x_i$ recursively by letting $x_i=1$ when 
\[
\frac{n}{d}-\sum_{j=1}^{i-1}x_j\geq1
\]
and otherwise letting $x_i$ be the unique odd positive integer for which 
\[
\frac{1}{x_i}\leq\frac{n}{d}-\sum_{j=1}^{i-1}x_j<\frac{1}{x_i-2}.
\]
Since the term $1/1$ can only occur when $n/d\geq1$ and repeated terms can only occur when $n/d\geq2/3$, there is no difference between the various notions of odd greedy expansion for rational numbers less than $2/3$, which is the case of primary interest to us. 

Our focus is on the length of odd greedy expansions, i.e.\ the number of steps of the odd greedy algorithm. We take two perspectives. In the first of these, considered in Section~\ref{sec:fixednumerator}, we fix the numerator of a fraction and ask which denominators give rise to odd greedy expansions of length~$2$. For a fixed numerator, the main result of this section (Theorem~\ref{thm:length2}) finds all reduced fractions with that numerator whose odd greedy expansion has length~$2$. 

In the second perspective, considered in Section~\ref{sec:fixedm-1}, we fix odd positive integers $x_1,\dotsc,x_{m-1}$ and ask for which odd positive integers~$x_m$ we have that $x_1,\dotsc,x_{m}$ are the denominators of an odd greedy expansion. The main result of this section (Theorem~\ref{thm:fixedm-1}) finds all rational numbers whose odd greedy expansion has length~$m$ and begins with denominators $x_1,\dotsc,x_{m-1}$. In the special case of $m=2$, Corollary~\ref{cor:fixed1length2} finds all rational numbers whose odd greedy expansion has length~$2$ and begins with fixed denominator~$x_1$.

The fractions produced in Section~\ref{sec:fixedm-1} may or may not be in reduced form, and Section~\ref{sec:fixedm-1reduced} considers how they can reduce. Theorem~\ref{thm:commonfactorbounds} constrains the possible greatest common divisors of their numerators and denominators, and Theorem~\ref{thm:extremalvaluationglobal} determines when the bounds of Theorem~\ref{thm:commonfactorbounds} are attained. Along the way, Proposition~\ref{prop:fixed1length2reduced} finds all reduced forms of rational numbers whose odd greedy expansion has length~$2$ and begins with fixed denominator~$x_1$.

Finally, in Section~\ref{sec:fixedm-2}, we fix odd positive integers $x_1,\dotsc,x_{m-2}$ and ask for which odd integers $x_{m-1}$ and~$x_m$ we have that $x_1,\dotsc,x_{m}$ are the denominators of an odd greedy expansion. Given compatible $x_1,\dotsc,x_{m-2}$, the main result of this section (Theorem~\ref{thm:fixedm-2}) produces an infinite family of rational numbers whose odd greedy expansion has length~$m$ and begins with denominators $x_1,\dotsc,x_{m-2}$. 

Before proceeding, we introduce some notation that will be used throughout the paper. Given a positive integer~$m$, let $[m]$ denote the set $\{1,\dotsc,m\}$. 

Given a positive integer~$m$, variables $x_1,\dotsc,x_m$, and an integer~$k$ with $k\leq m$, let $\sigma_k(x_1,\dotsc,x_m)$ be the \emph{elementary symmetric polynomial} of degree~$k$ in these variables, i.e.\ the sum of all products of~$k$ of the $m$~variables. More precisely, 
\[
\sigma_k(x_1,\dotsc,x_m)\coloneqq\sum_{\substack{I\subseteq[m]\\\abs{I}=k}}\prod_{i\in I}x_i.
\]
By definition, $\sigma_0(x_1,\dotsc,x_m)=1$ and $\sigma_k(x_1,\dotsc,x_m)=0$ whenever $k<0$.

Given an integer~$n$ and a prime number~$p$, let $v_p(n)$ denote the \emph{$p$-adic valuation} of~$n$, i.e.\ the exponent of the largest power of~$p$ that divides~$n$. 

\section{Length \texorpdfstring{$2$}{2} with fixed numerator}\label{sec:fixednumerator}

In this section, we characterize the rational numbers whose odd greedy expansion has length~$2$ by fixing a numerator and determining which reduced fractions with that numerator have odd greedy expansion of length~$2$. We begin by observing that a rational number that is the sum of an even number of unit fractions with odd denominators can only be represented by a fraction with even numerator.

\begin{proposition}\label{prop:lengthparity}
Let $m$ be an even nonnegative integer. If a rational number is the sum of $m$ unit fractions with odd denominators, then every fraction representing this rational number has even numerator.
\end{proposition}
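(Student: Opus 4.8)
The plan is to clear denominators and track parities. Suppose the rational number~$q$ is written as $q=\sum_{i=1}^{m}1/a_i$ with each $a_i$ an odd positive integer, and let $n/d$ be an \emph{arbitrary} fraction with $n/d=q$. Putting the unit fractions over the common denominator $D\coloneqq\prod_{i=1}^{m}a_i$ gives $q=N/D$, where $N\coloneqq\sum_{i=1}^{m}\prod_{j\neq i}a_j=\sigma_{m-1}(a_1,\dotsc,a_m)$.

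The key observation is a parity count on $N$ and~$D$. Since $D$ is a product of odd integers, $D$ is odd. Since $N$ is a sum of $m$ terms, each of which is a product of $m-1$ odd integers and hence odd, $N$ is a sum of $m$ odd integers; as $m$ is even, $N$ is even. (The case $m=0$ is the empty sum, so $q=0=0/1$ and the conclusion is immediate; thus we may assume $m\geq 2$.)

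It remains to transfer this to the representation $n/d$, which need not be in lowest terms. From $n/d=N/D$ we obtain $nD=dN$. Because $N$ is even, $dN$ is even, hence $nD$ is even; and since $D$ is odd, this forces $n$ to be even, which is exactly the claim.

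There is essentially no obstacle here: the only point needing care is that $n/d$ may differ from $N/D$ by a common factor (in either direction), and this is handled by cross-multiplying against the explicit odd-denominator representation $N/D$ rather than by comparing reduced forms. I will state the proof in this order: reduce to $m\geq 2$, exhibit $q=N/D$, argue $D$ odd and $N$ even via the $m$-fold sum of odd products, then conclude $n$ even from $nD=dN$.
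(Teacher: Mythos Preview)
Your argument is correct and is essentially the same as the paper's: both compute $\sum_{i=1}^m 1/x_i = \sigma_{m-1}(x_1,\dotsc,x_m)/(x_1\dotsm x_m)$, note that the denominator is odd and the numerator is a sum of $m$ odd terms and hence even, and conclude that the parity of the numerator persists in any representation. The only cosmetic difference is that you cross-multiply $nD=dN$ to finish, whereas the paper phrases this as ``factors of~$2$ can never be canceled from the numerator''; your version is arguably a touch more explicit, and you also handle the trivial case $m=0$ separately.
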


\begin{proof}
If a rational number is the sum of unit fractions with odd denominators $x_1,\dotsc,x_m$, then it is
\[
\sum_{i=1}^m\frac{1}{x_i}=\frac{\sigma_{m-1}(x_1,\dotsc, x_m)}{x_1\dotsm x_m}.
\]
The numerator $\sigma_{m-1}(x_1,\dotsc, x_m)$ is a sum of $m$~terms, each of which is odd, so hence it is even. The denominator $x_1\dotsm x_m$ is a product of odd integers, so it is odd. Since the denominator is odd, factors of~$2$ can never be canceled from the numerator, so every fraction representing this rational number has even numerator.
\end{proof}

We now find all rational numbers whose odd greedy expansion has length~$2$. Proposition~\ref{prop:lengthparity} allows us to only consider fractions with even numerators. We also restrict attention to fractions with odd denominators as fractions with even denominators are never reduced. 

\begin{proposition}\label{prop:length2}
Let $n$ be an even positive integer. The fractions with numerator~$n$ and odd positive denominators that have odd greedy expansion of length~$2$ are exactly those of the form 
\[
\frac{n}{n\bigl(\prod_{\substack{i=1}}^s p_i^{a_i}\bigr)(1+2t)-r},
\]
where $r$ is any odd positive integer less than $2n$, where $p_1,\dotsc,p_s$ are the prime divisors of~$r$, where $t$ is any nonnegative integer, and where
\[
a_i=\max\biggl\{\ceil[\bigg]{\frac{v_{p_i}(r)-v_{p_i}(n)}{2}},0\biggr\}.
\]
\end{proposition}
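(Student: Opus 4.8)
The plan is to unwind the length-$2$ condition into a single divisibility statement about the first greedy denominator $x_1$, and then rephrase that statement through $p$-adic valuations to reach the displayed formula. Write $x_1$ for the odd integer the odd greedy algorithm selects first for $n/d$, and set $r\coloneqq nx_1-d$. Since $n$ is even while $x_1$ and $d$ are odd, $r$ is odd. I would first check that ``$x_1$ is the greedy first choice'' is equivalent to ``$0<r<2n$'': for $x_1\ge 3$ this is just a rewriting of $1/x_1\le n/d<1/(x_1-2)$, and the inequality $r<2n$ forces $d>n(x_1-2)\ge n$, hence $n/d<1$, so the algorithm really does reject the term $1/1$; the case $x_1=1$ (where the rule is $n/d\ge 1$, and $n/d=1$ is impossible because $d$ is odd) is handled by hand. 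Thus $d=nx_1-r$ with $r$ an odd positive integer less than $2n$.

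Next I would show that the odd greedy expansion of $n/d$ has length~$2$ exactly when $r\mid dx_1$ --- equivalently, since $dx_1=nx_1^2-rx_1$, when $r\mid nx_1^2$. The remainder after the first step is $n/d-1/x_1=r/(dx_1)$, a fraction with odd numerator and odd denominator. It equals $1/x_2$ for a positive integer $x_2$ precisely when $r\mid dx_1$, and in that case $x_2=dx_1/r$ is itself odd (a quotient of odd integers), so the second greedy step picks denominator $x_2$ and leaves remainder $0$, giving length~$2$; conversely, if the remainder is not the reciprocal of an integer, one further step leaves a nonzero remainder and the expansion is longer. (The borderline possibility $x_2=1$ still terminates in two steps, using the convention that permits the term $1/1$.)

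Now I would rephrase $r\mid nx_1^2$. Factoring $r=\prod_{i=1}^s p_i^{v_{p_i}(r)}$ with each $p_i$ an odd prime, the condition $r\mid nx_1^2$ says $v_{p_i}(r)\le v_{p_i}(n)+2v_{p_i}(x_1)$, i.e.\ $v_{p_i}(x_1)\ge\ceil{(v_{p_i}(r)-v_{p_i}(n))/2}$, which together with $v_{p_i}(x_1)\ge 0$ is exactly $v_{p_i}(x_1)\ge a_i$, hence amounts to $\bigl(\prod_i p_i^{a_i}\bigr)\mid x_1$. Since $x_1$ and $\prod_i p_i^{a_i}$ are odd, this holds iff $x_1=\bigl(\prod_i p_i^{a_i}\bigr)(1+2t)$ for some integer $t\ge 0$; substituting into $d=nx_1-r$ yields the claimed form, and conversely every such choice of $r$ and $t$ reverses the argument. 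The one step I expect to cost a little care is verifying that each choice of $r$ and $t$ actually produces a \emph{positive} (automatically odd) denominator: if all $a_i=0$ then $v_{p_i}(r)\le v_{p_i}(n)$ for every $i$, so $r\mid n$ and hence $r<n$ (as $r$ odd and $n$ even force $r\neq n$), giving $d=nx_1-r\ge n-r>0$; and if some $a_i\ge 1$ then $x_1\ge p_i\ge 3$, so $d=nx_1-r\ge 3n-r>3n-2n=n>0$. This, together with the equivalences above, gives both inclusions of the claimed description.
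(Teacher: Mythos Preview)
Your proof is correct and follows essentially the same route as the paper: set $r=nx_1-d$, show the greedy-first-step condition is $0<r<2n$, reduce the length-$2$ condition to $r\mid nx_1^2$, and translate this into the valuation inequalities $v_{p_i}(x_1)\ge a_i$. You are in fact slightly more thorough than the paper in handling the borderline case $x_1=1$ and in verifying that every parameter choice $(r,t)$ yields a positive denominator~$d$.
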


\begin{proof}
An expansion
\[
\frac{n}{d} = \frac{1}{x_1} + \frac{1}{x_2}
\]
is equivalent to
\[
\frac{1}{x_2} = \frac{nx_1-d}{dx_1}.
\]
Let $r=nx_1-d$, which must be positive as $d$, $x_1$, and~$x_2$ are all positive. This also means $d=nx_1-r$. Since $n$ is even, we have that $d$ and~$r$ have the same parity, so we may assume both are odd. We know $x_1$ will be the first denominator in the odd greedy expansion of $n/d$ if and only if
\[
\frac{1}{x_1-2}>\frac{n}{d}.
\]
This is equivalent to
\[
x_i-2<\frac{d}{n}=\frac{nx_1-r}{n}=x_1-\frac{r}{n},
\]
which is equivalent to $r<2n$.

We also have
\[
x_2 = \frac{dx_1}{nx_1-d} = \frac{(nx_1-r)x_1}{r} = \frac{nx_1^2}{r} - x_1.
\]
Since $n$ is even and $r$ and~$x_1$ are odd, $x_2$ will be an odd integer if and only if $nx_1^2$ is divisible by~$r$. This will happen if and only if $x_1$ is an odd integer such that, for all $i\in[s]$, we have $v_{p_i}(r)\leq v_{p_i}(nx_1^2)=v_{p_i}(n)+2v_{p_i}(x_1)$, which is equivalent to
\[
v_{p_i}(x_1)\geq\frac{v_{p_i}(r)-v_{p_i}(n)}{2}.
\]
Since $v_{p_i}(x_1)$ must be a nonnegative integer, this is equivalent to requiring that
\[
v_{p_i}(x_1)\geq\max\biggl\{\ceil[\bigg]{\frac{v_{p_i}(r)-v_{p_i}(n)}{2}},0\biggr\}=a_i
\]
for all $i\in[s]$. This is true if and only if $x_1$ is divisible by $\prod_{\substack{i=1}}^s p_i^{a_i}$. Since $x_1$ must be an odd positive integer, this is equivalent to having
\[
x_1 = \Biggl(\prod_{\substack{i=1}}^s p_i^{a_i}\Biggr)(1+2t)
\]
for some nonnegative integer~$t$. Substituting into $d=nx_1-r$, this means $n/d$ has odd greedy expansion of length~$2$ if and only if it is of the form 
\[
\frac{n}{n\bigl(\prod_{\substack{i=1}}^s p_i^{a_i}\bigr)(2t+1)-r}.\qedhere
\]
\end{proof}

We next determine when the fractions produced in Proposition~\ref{prop:length2} are in reduced form.

\begin{theorem}\label{thm:length2}
Let $n$ be an even positive integer. The fractions in reduced form with numerator~$n$ that have odd greedy expansion of length~$2$ are exactly those of the form 
\[
\frac{n}{n\Bigl(\prod_{\substack{i=1}}^s p_i^{\ceil{v_{p_i}(r)/2}}\Bigr)(1+2t)-r},
\]
where $r$ is any odd positive integer that is coprime to~$n$ and less than $2n$, where $p_1,\dotsc,p_s$ are the prime divisors of~$r$, and where $t$ is any nonnegative integer.
\end{theorem}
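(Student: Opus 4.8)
The plan is to apply Proposition~\ref{prop:length2} and then impose reducedness, which reduces to a clean coprimality condition. By Proposition~\ref{prop:length2}, every fraction with numerator~$n$ and odd positive denominator whose odd greedy expansion has length~$2$ is $n/d$ with
\[
d=n\Biggl(\prod_{i=1}^s p_i^{a_i}\Biggr)(1+2t)-r,
\]
for some odd $r$ with $0<r<2n$, where $p_1,\dotsc,p_s$ are the primes dividing~$r$, where $t\geq0$, and where $a_i=\max\bigl\{\ceil{(v_{p_i}(r)-v_{p_i}(n))/2},0\bigr\}$. So the task is to determine which of these fractions satisfy $\gcd(n,d)=1$.

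The key observation is that $d\equiv-r\pmod{n}$, so by the Euclidean algorithm $\gcd(n,d)=\gcd(n,r)$. Hence $n/d$ is reduced if and only if $r$ is coprime to~$n$. This handles both directions of the ``exactly'' claim: every reduced fraction with numerator~$n$ and odd greedy expansion of length~$2$ arises, via Proposition~\ref{prop:length2}, from some $r$ coprime to~$n$; and conversely every fraction produced by Proposition~\ref{prop:length2} from an $r$ coprime to~$n$ is automatically reduced.

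It remains to simplify the exponents~$a_i$ under the hypothesis $\gcd(n,r)=1$. For each prime $p_i$ dividing~$r$ we then have $v_{p_i}(n)=0$, and since $v_{p_i}(r)\geq1$ we get $a_i=\max\bigl\{\ceil{v_{p_i}(r)/2},0\bigr\}=\ceil{v_{p_i}(r)/2}$. Substituting into the formula from Proposition~\ref{prop:length2} yields exactly the form claimed in Theorem~\ref{thm:length2}.

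I do not expect a genuine obstacle here: the result follows directly from Proposition~\ref{prop:length2} once one notices $\gcd(n,d)=\gcd(n,r)$. The only points needing a little care are making the two-directional nature of the statement explicit (every reduced such fraction has the stated form, and every fraction of the stated form is reduced with the correct numerator and expansion length) and verifying that the $\max$ defining~$a_i$ is attained by the ceiling term precisely when $p_i\mid r$ and $p_i\nmid n$.
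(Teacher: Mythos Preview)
Your proof is correct and follows essentially the same approach as the paper's: both reduce the question to Proposition~\ref{prop:length2}, observe via the Euclidean algorithm (equivalently, your congruence $d\equiv -r\pmod n$) that $\gcd(n,d)=\gcd(n,r)$, and then simplify the exponents~$a_i$ using $v_{p_i}(n)=0$ when $\gcd(n,r)=1$. The paper additionally invokes Proposition~\ref{prop:lengthparity} to note that a reduced fraction of this type must have even numerator and odd denominator, but since the theorem already hypothesizes that $n$ is even and reducedness forces the denominator odd, your omission of this step is harmless.
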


\begin{proof}
A rational number in reduced form that has odd greedy expansion of length~$2$ necessarily has even positive numerator by Proposition~\ref{prop:lengthparity} and necessarily has odd positive denominator because it is reduced, so therefore it must be of the form in Proposition~\ref{prop:length2}. Thus it remains to determine when fractions of this form are reduced. By the Euclidean algorithm, we have 
\[
\gcd\Biggl(n,n\Biggl(\prod_{\substack{i=1}}^s p_i^{a_i}\Biggr)(2t+1)-r\Biggr)=\gcd(n,r),
\]
so therefore such fractions are in reduced form exactly when $r$ is coprime to~$n$. In this case, we also have that $v_{p_i}(n)=0$ for all $i\in[s]$, so the expression for~$a_i$ in Proposition~\ref{prop:length2} reduces to 
\[
a_i=\ceil[\bigg]{\frac{v_{p_i}(r)}{2}}.\qedhere
\]
\end{proof}

We give examples of the families of rational numbers produced by Theorem~\ref{thm:length2}.

\begin{example}\label{ex:length2fixedn1}
Suppose $n=2$. The possible values of~$r$ are $1$ and~$3$, and each of these gives a family of reduced fractions with numerator~$2$ and odd greedy expansion of length~$2$. In the case $r=1$, this produces the following family of rational numbers with odd greedy expansion of length~$2$:
\[
\frac{2}{2(1)(1+2t)-1} = \frac{2}{1 + 4t}.
\]
\end{example}

\begin{example} \label{ex:length2fixedn2}
Suppose $n=6$. The possible values of~$r$ are $1$, $5$, $7$, and~$11$, and each of these gives a family of reduced fractions with numerator~$6$ and odd greedy expansion of length~$2$. In the case $r=5$, we have
\[
\ceil[\bigg]{\frac{v_5(r)}{2}} = \ceil[\bigg]{\frac{v_5(5)}{2}} = 
\ceil[\bigg]{\frac{1}{2}} = 1.
\]
This produces the following family of rational numbers with odd greedy expansion of length~$2$:
\[
\frac{6}{6(5^1)(1+2t)-5} = \frac{6}{25 + 60t}.
\]
\end{example}

\section{Length \texorpdfstring{$m$}{m} with fixed \texorpdfstring{$x_1,\dotsc,x_{m-1}$}{x\_1,...,x\_{m-1}}}\label{sec:fixedm-1}

In this section, we find all rational numbers whose odd greedy expansion has length~$m$ and begins with $m-1$ fixed positive odd denominators. We begin with a proposition that characterizes when a finite list of odd positive integers arises as the denominators of an odd greedy expansion. 

\begin{proposition}\label{prop:expansionequivalence}
Let $m$ be a positive integer, and let $x_1,\dotsc,x_m$ be odd positive integers. The following statements are equivalent:
\begin{enumerate}[label=\textup{(\alph*)},ref=\textup{\alph*}]
\item The integers $x_1,\dotsc,x_m$ are the denominators of the odd greedy expansion of the sum of their reciprocals. \label{item:expansion}
\item For all positive integers $i$ and~$k$ with $i\leq k\leq m$, we have \label{item:inequality}
\[
2\sigma_{k-i}(x_i,\dotsc, x_{k})>x_i^2\sigma_{k-i-1}(x_{i+1},\dotsc, x_{k}).
\]
\item For all positive integers $i$ and~$k$ with $i<k\leq m$, we have \label{item:x_kbound}
\begin{equation}\label{eq:x_kbound}
x_k>\frac{(x_i-2)x_{i}\dotsm x_{k-1}}{2\sigma_{k-i-1}(x_i,\dotsc, x_{k-1})-x_i^2\sigma_{k-i-2}(x_{i+1},\dotsc, x_{k-1})}.
\end{equation}
\end{enumerate}
\end{proposition}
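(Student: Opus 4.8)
The plan is to establish the equivalence of~(a) and~(b) by unwinding the recursive definition of the odd greedy algorithm and rewriting the resulting inequalities in terms of reciprocals, and then the equivalence of~(b) and~(c) by solving the inequality in~(b) for~$x_k$. For the first of these, observe that $\sum_{j=1}^m\frac{1}{x_j}-\sum_{j=1}^{i-1}\frac{1}{x_j}=\sum_{j=i}^m\frac{1}{x_j}$, so the integers $x_1,\dotsc,x_m$ are the denominators of the odd greedy expansion of $\sum_{j=1}^m\frac{1}{x_j}$ precisely when, for every $i\in[m]$, the greedy rule selects~$x_i$ given the remainder $R_i\coloneqq\sum_{j=i}^m\frac{1}{x_j}$ (after which the remainder $R_{m+1}=0$ halts the algorithm at step~$m$). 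Since the sum defining~$R_i$ includes the term $\frac{1}{x_i}$, we have $R_i\ge\frac{1}{x_i}$, so the lower inequality in the greedy rule holds automatically, and the rule selects~$x_i$ exactly when $(x_i-2)R_i<1$; this holds automatically when $x_i=1$ (where the rule instead asks only that $R_i\ge1$, which $R_i\ge\frac{1}{x_i}=1$ guarantees) and is the inequality $R_i<\frac{1}{x_i-2}$ when $x_i\ge3$. Thus~(a) holds if and only if $(x_i-2)\sum_{j=i}^m\frac{1}{x_j}<1$ for all $i\in[m]$.

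To connect this with~(b), I would use that for $i\le k\le m$ the polynomial $\sigma_{k-i}(x_i,\dotsc,x_k)$ has degree one less than its number of variables and hence equals $(x_i\dotsm x_k)\sum_{j=i}^k\frac{1}{x_j}$, and similarly $\sigma_{k-i-1}(x_{i+1},\dotsc,x_k)=(x_{i+1}\dotsm x_k)\sum_{j=i+1}^k\frac{1}{x_j}$. Dividing the inequality in~(b) for the pair~$(i,k)$ by $x_i\dotsm x_k>0$ and using $\sum_{j=i+1}^k\frac{1}{x_j}=\sum_{j=i}^k\frac{1}{x_j}-\frac{1}{x_i}$, a short calculation shows it is equivalent to $(x_i-2)\sum_{j=i}^k\frac{1}{x_j}<1$. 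For fixed~$i$, the quantity $\sum_{j=i}^k\frac{1}{x_j}$ is nondecreasing in~$k$, so (the case $x_i=1$ being automatic throughout) the family of these inequalities over all~$k$ with $i\le k\le m$ is equivalent to the single one with $k=m$; combined with the previous paragraph, this gives the equivalence of~(a) and~(b).

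For the equivalence of~(b) and~(c), fix $i<k\le m$ and separate off the variable~$x_k$ in the two symmetric polynomials in~(b), using that $\sigma_{k-i}(x_i,\dotsc,x_{k-1})=x_i\dotsm x_{k-1}$ and $\sigma_{k-i-1}(x_{i+1},\dotsc,x_{k-1})=x_{i+1}\dotsm x_{k-1}$ (each having degree equal to its number of variables). The inequality in~(b) for~$(i,k)$ then becomes
\[
x_k\bigl(2\sigma_{k-i-1}(x_i,\dotsc,x_{k-1})-x_i^2\sigma_{k-i-2}(x_{i+1},\dotsc,x_{k-1})\bigr)>(x_i-2)\,x_i\dotsm x_{k-1},
\]
and the coefficient $D_{i,k}$ of~$x_k$ on the left is exactly the denominator appearing in~\eqref{eq:x_kbound}. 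When $D_{i,k}>0$ this is equivalent to~\eqref{eq:x_kbound}; and by the same reciprocal rewriting as above, $D_{i,k}>0$ is itself equivalent to the inequality in~(b) for the pair~$(i,k-1)$. Hence~(b) immediately implies~(c). For the converse, I would induct on~$k$: the pair~$(i,i)$ gives the trivially true inequality $2>0$, and for $k>i$ the inductive hypothesis supplies the inequality in~(b) for~$(i,k-1)$, whence $D_{i,k}>0$, so multiplying~\eqref{eq:x_kbound} through by $D_{i,k}$ recovers the inequality in~(b) for~$(i,k)$.

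The step I expect to be the main obstacle is this last one. For an arbitrary tuple $x_1,\dotsc,x_m$, the quantity $D_{i,k}$ that one divides through by in~(c) need not be positive: when $x_i\ge3$ and $D_{i,k}<0$, for instance, the displayed inequality above fails whereas~\eqref{eq:x_kbound} holds because its right-hand side is then negative. So~\eqref{eq:x_kbound} is faithful to~(b) only once one knows $D_{i,k}>0$, and this positivity has to be propagated through the cascading induction above rather than checked pair by pair. A more minor nuisance is the separate bookkeeping needed for the $x_i=1$ case of the greedy rule in the first step.
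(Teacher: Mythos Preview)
Your proposal is correct and follows essentially the same route as the paper: the equivalence of~(a) and~(b) via the greedy rule (with the $x_i=1$ case handled separately), and the equivalence of~(b) and~(c) by separating off~$x_k$ and inducting on~$k$ to ensure the denominator $D_{i,k}$ is positive. The one presentational difference is that you pass through the reciprocal-sum form $(x_i-2)\sum_{j=i}^k\frac{1}{x_j}<1$ as a common currency, which makes the monotonicity in~$k$ (and hence the reduction of~(b) to the case $k=m$) transparent, whereas the paper manipulates the symmetric polynomials directly; but this is a cosmetic rather than a structural distinction.
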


\begin{proof}
We first show~(\ref{item:expansion}) implies~(\ref{item:inequality}). Suppose $x_1,\dotsc,x_m$ are the denominators of the odd greedy expansion of the sum of their reciprocals. Fix positive integers $i$ and~$k$ with $i\leq k\leq m$. In the case when $x_i\geq3$, since $x_i$ occurs as the $i$th denominator of the odd greedy expansion of $\sum_{j=1}^m1/x_j$, we have
\[
\frac{1}{x_i-2}>\sum_{j=i}^m\frac{1}{x_j}\geq\sum_{j=i}^k\frac{1}{x_j}=\frac{\sigma_{k-i}(x_i,\dotsc, x_{k})}{x_i\dotsm x_{k}}.
\]
This implies that
\[
\begin{split}
x_i\dotsm x_{k}&>x_i\sigma_{k-i}(x_i,\dotsc, x_{k})-2\sigma_{k-i}(x_i,\dotsc, x_{k})\\
&=x_i^2\sigma_{k-i-1}(x_{i+1},\dotsc, x_{k})+x_i\dotsm x_{k}-2\sigma_{k-i}(x_i,\dotsc, x_{k}),
\end{split}
\]
and hence that
\[
2\sigma_{k-i}(x_i,\dotsc, x_{k})>x_i^2\sigma_{k-i-1}(x_{i+1},\dotsc, x_{k}).
\]
When $x_i=1$, we have
\[
2\sigma_{k-i}(x_i,\dotsc, x_{k})=2x_i\sigma_{k-i-1}(x_{i+1},\dotsc, x_{k})+2x_{i+1}\dotsm x_k>x_i^2\sigma_{k-i-1}(x_{i+1},\dotsc, x_{k}).
\]

We next show~(\ref{item:inequality}) implies~(\ref{item:expansion}). Suppose
\[
2\sigma_{k-i}(x_i,\dotsc, x_{k})>x_i^2\sigma_{k-i-1}(x_{i+1},\dotsc, x_{k})
\]
for all positive integers $i$ and~$k$ with $i\leq k\leq m$. In the special case when $k=m$, this becomes $2\sigma_{m-i}(x_i,\dotsc, x_m)>x_i^2\sigma_{m-i-1}(x_{i+1},\dotsc, x_m)$, which gives 
\[
\begin{split}
x_i\dotsm x_m&>x_i^2\sigma_{m-i-1}(x_{i+1},\dotsc, x_m)+x_i\dotsm x_m-2\sigma_{m-i}(x_i,\dotsc, x_m)\\
&=x_i\sigma_{m-i}(x_i,\dotsc, x_m)-2\sigma_{m-i}(x_i,\dotsc, x_m).
\end{split}
\]
In the case when $x_i\geq3$, this becomes
\[
\frac{1}{x_i-2}>\sum_{j=i}^m\frac{1}{x_j}=\frac{\sigma_{m-i}(x_i,\dotsc, x_m)}{x_i\dotsm x_m},
\]
meaning that $x_i$ is the $i$th denominator of the odd greedy expansion of $\sum_{j=1}^m1/x_j$. When $x_i=1$, we have
\[
\sum_{j=i}^m\frac{1}{x_j}\geq1,
\]
so $x_i$ is also the $i$th denominator of the odd greedy expansion of $\sum_{j=1}^m1/x_j$. Thus $x_1,\dotsc,x_m$ are the denominators of the odd greedy expansion of $\sum_{j=1}^m1/x_j$.

We next show~(\ref{item:inequality}) implies~(\ref{item:x_kbound}). Suppose 
\[
2\sigma_{k-i}(x_i,\dotsc, x_{k})>x_i^2\sigma_{k-i-1}(x_{i+1},\dotsc, x_{k})
\]
for all positive integers $i$ and~$k$ with $i\leq k\leq m$. Under the assumption $i<k$, this implies
\[
2x_i\dotsm x_{k-1}+2\sigma_{k-i-1}(x_i,\dotsc, x_{k-1})x_k>x_i(x_i\dotsm x_{k-1})+x_i^2\sigma_{k-i-2}(x_{i+1},\dotsc, x_{k-1})x_k,
\]
which implies
\[
\bigl(2\sigma_{k-i-1}(x_i,\dotsc, x_{k-1})-x_i^2\sigma_{k-i-2}(x_{i+1},\dotsc, x_{k-1})\bigr)x_k>(x_i-2)x_{i}\dotsm x_{k-1}.
\]
Since~(\ref{item:inequality}) with $k$ replaced by $k-1$ gives that the coefficient of~$x_k$ is positive, it follows that~\eqref{eq:x_kbound} holds.

Finally, we show~(\ref{item:x_kbound}) implies~(\ref{item:inequality}). We do this by showing by induction on~$k$ that $2\sigma_{k-i}(x_i,\dotsc, x_k)>x_i^2\sigma_{k-i-1}(x_{i+1},\dotsc, x_k)$ for all positive integers $i$ and~$k$ with $i\leq k\leq m$. In the base case $k=1$, we must have $i=1$, in which case the statement says $2>0$. In the inductive step, if $i=k$, then the statement says $2>0$. If $i<k$, then the inductive hypothesis gives that the denominator of the fraction on the right of~\eqref{eq:x_kbound} is positive, so we can multiply through by it to get 
\[
\bigl(2\sigma_{k-i-1}(x_i,\dotsc, x_{k-1})-x_i^2\sigma_{k-i-2}(x_{i+1},\dotsc, x_{k-1})\bigr)x_k>(x_i-2)x_{i}\dotsm x_{k-1}.
\]
This implies
\[
2x_i\dotsm x_{k-1}+2\sigma_{k-i-1}(x_i,\dotsc, x_{k-1})x_k>x_i(x_i\dotsm x_{k-1})+x_i^2\sigma_{k-i-2}(x_{i+1},\dotsc, x_{k-1})x_k,
\]
which implies
\[
2\sigma_{k-i}(x_i,\dotsc, x_k)>x_i^2\sigma_{k-i-1}(x_{i+1},\dotsc, x_k).\qedhere
\]
\end{proof}

We now use Proposition~\ref{prop:expansionequivalence} to find all rational numbers whose odd greedy expansion has length~$m$ and begins with $m-1$ fixed positive odd denominators.

\begin{theorem}\label{thm:fixedm-1}
Let $m$ be a positive integer, and let $x_1,\dotsc,x_{m-1}$ be odd positive integers that satisfy inequality~\eqref{eq:x_kbound} for all positive integers $i$ and~$k$ with $i<k\leq m-1$. If $b$ is the smallest odd positive integer greater than
\[
\max_{i\in[m-1]}\biggl\{\frac{(x_i-2)x_{i}\dotsm x_{m-1}}{2\sigma_{m-i-1}(x_i,\dotsc, x_{m-1})-x_i^2\sigma_{m-i-2}(x_{i+1},\dotsc, x_{m-1})}\biggr\},
\]
then the rational numbers whose odd greedy expansion has length~$m$ and begins with denominators $x_1,\dotsc,x_{m-1}$ are exactly those of the form
\[
\frac{\sigma_{m-1}(x_1,\dotsc ,x_{m-1},b)+2\sigma_{m-2}(x_1,\dotsc ,x_{m-1})t}{x_1\dotsm x_{m-1}b+2x_1\dotsm x_{m-1}t},
\]
where $t$ is any nonnegative integer.
\end{theorem}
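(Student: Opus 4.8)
The plan is to deduce the theorem from Proposition~\ref{prop:expansionequivalence} together with an elementary expansion of $\sigma_{m-1}$. The first step is to observe that a rational number has odd greedy expansion of length~$m$ beginning with denominators $x_1,\dotsc,x_{m-1}$ if and only if there is an odd positive integer~$x_m$ such that this rational number equals $\sum_{j=1}^m 1/x_j$ and the integers $x_1,\dotsc,x_m$ are the denominators of the odd greedy expansion of $\sum_{j=1}^m 1/x_j$. For the forward implication, such an expansion is necessarily $1/x_1+\dotsb+1/x_{m-1}+1/x_m$ for some odd positive integer~$x_m$, and summing the terms gives the claim. For the reverse implication, if $x_1,\dotsc,x_m$ are the greedy denominators of their reciprocal sum, then by definition the odd greedy expansion of that sum is $1/x_1+\dotsb+1/x_m$, which has length exactly~$m$ and begins with $x_1,\dotsc,x_{m-1}$.

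Next I would apply the equivalence of parts~(\ref{item:expansion}) and~(\ref{item:x_kbound}) of Proposition~\ref{prop:expansionequivalence} to the list $x_1,\dotsc,x_m$: these integers are the greedy denominators of their reciprocal sum if and only if inequality~\eqref{eq:x_kbound} holds for all positive integers $i$ and~$k$ with $i<k\leq m$. By hypothesis the inequalities with $k\leq m-1$ already hold, so the only additional constraints come from $k=m$, which read
\[
x_m>\frac{(x_i-2)x_{i}\dotsm x_{m-1}}{2\sigma_{m-i-1}(x_i,\dotsc, x_{m-1})-x_i^2\sigma_{m-i-2}(x_{i+1},\dotsc, x_{m-1})}
\]
for all $i\in[m-1]$. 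Requiring $x_m$ to exceed each of these right-hand sides is the same as requiring it to exceed their maximum, and since $x_m$ ranges over odd positive integers this holds precisely when $x_m=b+2t$ for some nonnegative integer~$t$, with $b$ the quantity defined in the statement.

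Finally I would carry out the substitution. From $\sum_{j=1}^m 1/x_j=\sigma_{m-1}(x_1,\dotsc,x_m)/(x_1\dotsm x_m)$ and the identity $\sigma_{m-1}(x_1,\dotsc,x_{m-1},x_m)=x_1\dotsm x_{m-1}+\sigma_{m-2}(x_1,\dotsc,x_{m-1})x_m$, obtained by splitting the monomials of $\sigma_{m-1}$ according to whether they contain the factor~$x_m$, setting $x_m=b+2t$ turns the numerator into $\sigma_{m-1}(x_1,\dotsc,x_{m-1},b)+2\sigma_{m-2}(x_1,\dotsc,x_{m-1})t$ and turns the denominator $x_1\dotsm x_{m-1}x_m$ into $x_1\dotsm x_{m-1}b+2x_1\dotsm x_{m-1}t$, which is the asserted form. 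Combining the three steps shows that the rational numbers in question are exactly those displayed.

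The argument is largely bookkeeping, and I do not expect a serious obstacle. The step requiring the most care is the first one: one must check that ``odd greedy expansion of length~$m$ beginning with $x_1,\dotsc,x_{m-1}$'' corresponds exactly to condition~(\ref{item:expansion}) of Proposition~\ref{prop:expansionequivalence} for the extended list $x_1,\dotsc,x_m$---in particular that this pins down the length as exactly~$m$---together with the minor point that, whatever the value of the displayed maximum, the odd positive integers strictly exceeding it are precisely $b,b+2,b+4,\dotsc$.
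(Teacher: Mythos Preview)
Your proposal is correct and follows essentially the same route as the paper: reduce to Proposition~\ref{prop:expansionequivalence}, use the hypothesis to isolate the $k=m$ inequalities as the only remaining constraints on~$x_m$, characterize their solutions as $x_m=b+2t$, and then expand $\sigma_{m-1}(x_1,\dotsc,x_{m-1},b+2t)$ via the identity $\sigma_{m-1}(x_1,\dotsc,x_{m-1},x_m)=x_1\dotsm x_{m-1}+\sigma_{m-2}(x_1,\dotsc,x_{m-1})x_m$. If anything, you are slightly more explicit than the paper in justifying the biconditional in the first step.
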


\begin{proof}
Since we have assumed inequality~\eqref{eq:x_kbound} holds whenever $i<k\leq m-1$, according to Proposition~\ref{prop:expansionequivalence} a given~$x_m$ will lead to an odd greedy expansion of length~$m$ if and only if~\eqref{eq:x_kbound} also holds for all $i\in[m-1]$ when $k=m$. Since $b$ is the smallest odd integer greater than all the lower bounds on~$x_m$ given by~\eqref{eq:x_kbound}, all of these inequalities will be satisfied exactly when $x_m=b+2t$ for some nonnegative integer~$t$. The rational number is then
\[
\begin{split}
\sum_{j=1}^{m}\frac{1}{x_j}&=\frac{\sigma_{m-1}(x_1,\dotsc ,x_{m-1},b+2t)}{x_1\dotsm x_{m-1}(b+2t)}\\
&=\frac{x_1\dotsm x_{m-1}+\sigma_{m-2}(x_1,\dotsc ,x_{m-1})b+2\sigma_{m-2}(x_1,\dotsc ,x_{m-1})t}{x_1\dotsm x_{m-1}b+2x_1\dotsm x_{m-1}t}\\
&=\frac{\sigma_{m-1}(x_1,\dotsc ,x_{m-1},b)+2\sigma_{m-2}(x_1,\dotsc ,x_{m-1})t}{x_1\dotsm x_{m-1}b+2x_1\dotsm x_{m-1}t}.\qedhere
\end{split}
\]
\end{proof}

In the special case of $m=2$, we can find $b$ more explicitly and thus get a more explicit description of the rational numbers whose odd greedy expansion has length~$2$ and begins with a given positive odd denominator.

\begin{corollary}\label{cor:fixed1length2}
Let $x_1$ be an odd positive integer. The rational numbers whose odd greedy expansion has length~$2$ and begins with denominator~$x_1$ are exactly those of the form
\[
\frac{(x_1^2+3)/2+2t}{(x_1^3+3x_1)/2-x_1^2+2x_1t},
\]
where $t$ is any nonnegative integer.
\end{corollary}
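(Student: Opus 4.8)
The plan is to specialize Theorem~\ref{thm:fixedm-1} to the case $m=2$. When $m=2$, the hypothesis that $x_1,\dotsc,x_{m-1}$ satisfy inequality~\eqref{eq:x_kbound} for all positive integers $i<k\leq m-1$ is vacuous, since there are no such pairs $(i,k)$; thus Theorem~\ref{thm:fixedm-1} applies to an arbitrary odd positive integer~$x_1$, and what remains is to evaluate the quantity~$b$ and substitute it into the stated formula.

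First I would compute the maximum defining~$b$. With $m=2$ the index~$i$ ranges only over $\{1\}$, and the elementary symmetric polynomials collapse: $\sigma_{m-i-1}(x_i,\dotsc,x_{m-1})=\sigma_0(x_1)=1$, while $\sigma_{m-i-2}(x_{i+1},\dotsc,x_{m-1})=0$ since $m-i-2=-1<0$, and the product $x_i\dotsm x_{m-1}$ is just $x_1$. Hence the maximum equals $(x_1-2)x_1/2$, and $b$ is the smallest odd positive integer greater than $(x_1-2)x_1/2$. Since $x_1$ is odd, $(x_1-2)x_1$ is odd, so writing $x_1=2\ell+1$ this bound equals $2\ell^2-\tfrac12$; the smallest odd integer exceeding it is $2\ell^2+1$ (the odd integer $2\ell^2-1$ is too small, and $2\ell^2$ is even), and this is positive. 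Therefore $b=2\ell^2+1=(x_1^2-2x_1+3)/2$, and this closed form also covers the case $x_1=1$, where the bound $(x_1-2)x_1/2=-\tfrac12$ is negative and $b=1$.

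Finally, I would substitute $m=2$ and $b=(x_1^2-2x_1+3)/2$ into the conclusion of Theorem~\ref{thm:fixedm-1}. The numerator becomes $\sigma_1(x_1,b)+2\sigma_0(x_1)t=x_1+b+2t=(x_1^2+3)/2+2t$, and the denominator becomes $x_1b+2x_1t=(x_1^3+3x_1)/2-x_1^2+2x_1t$, which is exactly the claimed form. The only step that requires any care is the parity argument pinning down $b=(x_1^2-2x_1+3)/2$ as the smallest odd positive integer above the half-integer bound $(x_1-2)x_1/2$; everything else is direct substitution into Theorem~\ref{thm:fixedm-1}, and I would check the small case $x_1=1$ explicitly as a sanity test.
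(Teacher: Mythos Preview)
Your proposal is correct and follows essentially the same approach as the paper's own proof: both specialize Theorem~\ref{thm:fixedm-1} to $m=2$, compute the lower bound $(x_1-2)x_1/2$, determine $b=(x_1^2-2x_1+3)/2$ by a parity argument, and then substitute. The only cosmetic difference is that the paper rewrites the bound as $(x_1-1)^2/2-\tfrac12$ and observes that $(x_1-1)^2/2$ is an even integer, whereas you substitute $x_1=2\ell+1$ and work with $2\ell^2-\tfrac12$; these are the same computation in different notation.
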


\begin{proof}
In Theorem~\ref{thm:fixedm-1}, we have that $b$ is the smallest odd positive integer greater than
\[
\frac{x_1^2-2x_1}{2}=\frac{x_1^2-2x_1+1-1}{2}=\frac{(x_1-1)^2}{2}-\frac{1}{2}.
\]
Since $(x_1-1)^2/2$ is always an even number, this means 
\[
b=\frac{(x_1-1)^2}{2}+1=\frac{x_1^2-2x_1+1+2}{2}=\frac{x_1^2+3}{2}-x_1.
\]
Therefore Theorem~\ref{thm:fixedm-1} gives that the rational numbers whose odd greedy expansion has length~$2$ and begins with denominator~$x_1$ are exactly those of the form
\[
\begin{split}
\frac{\sigma_1(x_1,b)+2\sigma_0(x_1)t}{x_1b+2x_1t}&=\frac{x_1+b+2t}{x_1b+2tx_1}\\
&=\frac{x_1+(x_1^2+3)/2-x_1+2t}{x_1\bigl((x_1^2+3)/2-x_1\bigr)+2x_1t}\\
&=\frac{(x_1^2+3)/2+2t}{(x_1^3+3x_1)/2-x_1^2+2x_1t}.\qedhere
\end{split}
\]
\end{proof}

We give several examples, first in the case of length~$2$.

\begin{example}\label{ex:fixed1length2a}
When $m=2$ and $x_1=3$, Corollary~\ref{cor:fixed1length2} says the rational numbers whose odd greedy expansion has length~$2$ with first denominator~$3$ are exactly those of the form
\[
\frac{(x_1^2+3)/2+2t}{(x_1^3+3x_1)/2-x_1^2+2x_1t}=\frac{(3^2+3)/2+2t}{(3^3+3\cdot3)/2-3^2+2\cdot 3t}=\frac{6+2t}{9+6t}.
\]
\end{example}

\begin{example}\label{ex:fixed1length2b}
When $m=2$ and $x_1=5$, Corollary~\ref{cor:fixed1length2} says the rational numbers whose odd greedy expansion has length~$2$ with first denominator~$5$ are exactly those of the form
\[
\frac{(x_1^2+3)/2+2t}{(x_1^3+3x_1)/2-x_1^2+2x_1t}=\frac{(5^2+3)/2+2t}{(5^3+3\cdot5)/2-5^2+2\cdot 5t}=\frac{14+2t}{45+10t}.
\]
\end{example}

We also give some examples in the case of length~$3$.

\begin{example}\label{ex:fixed2length3a}
Suppose $m=3$, $x_1=3$ and $x_2=5$. Since~\eqref{eq:x_kbound} is satisfied in the case $i=1$ and $k=2$, we can apply Theorem~\ref{thm:fixedm-1}. When $k=3$, the bounds of~\eqref{eq:x_kbound} give that $x_3>15/7$ and $x_3>15/2$, so therefore $b=9$. Thus the rational numbers whose odd greedy expansion has length~$3$ and begins with denominators $3$ and~$5$ are exactly those of the form 
\[
\frac{\sigma_2(3,5,9)+2\sigma_1(3,5)t}{3\cdot5\cdot9+2\cdot3\cdot5\cdot t}=\frac{87+16t}{135+30t}.
\]
\end{example}

\begin{example}\label{ex:fixed2length3}
Suppose $m=3$, $x_1=5$ and $x_2=9$. Since~\eqref{eq:x_kbound} is satisfied in the case $i=1$ and $k=2$, we can apply Theorem~\ref{thm:fixedm-1}. When $k=3$, the bounds of~\eqref{eq:x_kbound} give that $x_3>45$ and $x_3>63/2$, so therefore $b=47$. Thus the rational numbers whose odd greedy expansion has length $3$ and begins with denominators $5$ and~$9$ are exactly those of the form 
\[
\frac{\sigma_2(5,9,47)+2\sigma_1(5,9)t}{5\cdot9\cdot47+2\cdot5\cdot9\cdot t}=\frac{703+28t}{2115+90t}.
\]
\end{example}

\section{Reductions with length \texorpdfstring{$m$}{m} and fixed \texorpdfstring{$x_1,\dotsc,x_{m-1}$}{x\_1,...,x\_{m-1}}}\label{sec:fixedm-1reduced}

Unlike in Section~\ref{sec:fixednumerator}, where the families of fractions produced are either always in reduced form or never in reduced form, the families of fractions produced in Section~\ref{sec:fixedm-1} are commonly in reduced form for some values of~$t$ but not for other values of~$t$. In this section, we study the possible greatest common divisors of the numerator and denominator of 
\begin{equation}\label{eq:generalfraction}
\frac{\sigma_{m-1}(x_1,\dotsc,x_m)}{x_1\dotsm x_m}
\end{equation}
in terms of $x_1,\dotsc,x_{m-1}$.

\subsection{Constraints on greatest common divisors}
This subsection finds constraints on greatest common divisors of the numerator and denominator of~\eqref{eq:generalfraction} in terms of $p$-adic valuations of $x_1,\dotsc,x_{m-1}$. We begin with a lemma that finds an expression for a $p$-adic valuation of the numerator $\sigma_{m-1}(x_1,\dotsc,x_m)$. 

\begin{lemma}\label{lem:symmetricpolynomialvaluation}
Let $x_1,\dotsc,x_{m}$ be positive integers, and let $p$ be a prime number. If $W_p=\max_{i\in[m]}\{v_p(x_i)\}$, then
\[
v_p(\sigma_{m-1}(x_1,\dotsc,x_m))=v_p(x_1\dotsm x_m)-W_p+v_p\vastl(\sum_{i=1}^mp^{W_p-v_p(x_i)}\prod_{\substack{j=1\\j\neq i}}^m\frac{x_j}{p^{v_p(x_j)}}\vastr).
\]
\end{lemma}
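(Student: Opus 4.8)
\emph{Proof proposal.} The plan is to strip the prime $p$ out of $\sigma_{m-1}(x_1,\dotsc,x_m)$ to exactly the power that its term structure forces, and then read off the $p$-adic valuation. First I would factor $x_j=p^{v_p(x_j)}u_j$ for each $j$, where $u_j\coloneqq x_j/p^{v_p(x_j)}$ is coprime to~$p$; this $u_j$ is precisely the factor $x_j/p^{v_p(x_j)}$ appearing inside the sum in the statement. I would also abbreviate $V\coloneqq v_p(x_1\dotsm x_m)=\sum_{j=1}^m v_p(x_j)$.

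The main step uses the identity $\sigma_{m-1}(x_1,\dotsc,x_m)=\sum_{i=1}^m\prod_{j\neq i}x_j$, since a product of $m-1$ of the $m$ variables omits exactly one of them. Substituting the factorization of each $x_j$ into the $i$th summand gives $\prod_{j\neq i}x_j=p^{V-v_p(x_i)}\prod_{j\neq i}u_j$, and hence
\[
\sigma_{m-1}(x_1,\dotsc,x_m)=p^{V-W_p}\sum_{i=1}^m p^{W_p-v_p(x_i)}\prod_{\substack{j=1\\j\neq i}}^m u_j.
\]
By the definition of $W_p$, each exponent $W_p-v_p(x_i)$ is a nonnegative integer, so the sum on the right is a positive integer. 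Taking $v_p$ of both sides and using additivity of $v_p$ over products then yields $v_p(\sigma_{m-1}(x_1,\dotsc,x_m))=(V-W_p)+v_p\bigl(\sum_{i=1}^m p^{W_p-v_p(x_i)}\prod_{j\neq i}u_j\bigr)$, which is the claimed formula after rewriting $V$ and the $u_j$ in terms of the $x_j$.

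I do not expect a genuine obstacle here: the whole argument is a single factorization followed by additivity of $v_p$ on products. The only point requiring care is that one must factor out $p^{V-W_p}$ rather than the full $p^{V}=p^{v_p(x_1\dotsm x_m)}$, because it is the \emph{maximum} $W_p=\max_i v_p(x_i)$ that makes every exponent $W_p-v_p(x_i)$ nonnegative and hence every term $p^{W_p-v_p(x_i)}\prod_{j\neq i}u_j$ an integer; this is exactly what licenses applying $v_p(ab)=v_p(a)+v_p(b)$ to the product $p^{V-W_p}\cdot\bigl(\sum_i p^{W_p-v_p(x_i)}\prod_{j\neq i}u_j\bigr)$.
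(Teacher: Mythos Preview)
Your proposal is correct and follows essentially the same approach as the paper's proof: both write $\sigma_{m-1}$ as $\sum_i\prod_{j\neq i}x_j$, factor each term as $p^{v_p(x_1\dotsm x_m)-v_p(x_i)}$ times a $p$-free product, pull out the common factor $p^{v_p(x_1\dotsm x_m)-W_p}$, and apply additivity of $v_p$. Your version adds the helpful remark that the choice of $W_p$ is exactly what makes the remaining sum a positive integer, which the paper leaves implicit.
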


\begin{proof}
We compute that
\[
\begin{split}
v_p(\sigma_{m-1}(x_1,\dotsc,x_m))&=v_p\vastl(\sum_{i=1}^m\prod_{\substack{j=1\\j\neq i}}^{m}{x_j}\vastr)\\
&=v_p\vastl(\sum_{i=1}^{m}p^{v_p(x_1\dotsm x_{m}/x_i)}\prod_{\substack{j=1\\j\neq i}}^{m}\frac{x_j}{p^{v_p(x_j)}}\vastr)\\
&=v_p\vastl(p^{v_p(x_1\dotsm x_{m})-W_p}\sum_{i=1}^{m}p^{W_p-v_p(x_i)}\prod_{\substack{j=1\\j\neq i}}^{m}\frac{x_j}{p^{v_p(x_j)}}\vastr)\\
&=v_p(x_1\dotsm x_m)-W_p+v_p\vastl(\sum_{i=1}^mp^{W_p-v_p(x_i)}\prod_{\substack{j=1\\j\neq i}}^m\frac{x_j}{p^{v_p(x_j)}}\vastr).\qedhere
\end{split}
\]
\end{proof}

We simplify the expression in Lemma~\ref{lem:symmetricpolynomialvaluation} depending on the size of the $p$-adic valuation of~$x_m$ relative to the $p$-adic valuations of $x_1,\dotsc,x_{m-1}$.

\begin{lemma}\label{lem:symmetricpolynomialvaluationcases}
Let $x_1,\dotsc,x_{m}$ be positive integers, let $p$ be a prime number, and let $V_p=\max_{i\in[m-1]}\{v_p(x_i)\}$. 
\begin{enumerate}[label=\textup{(\alph*)},ref=\textup{\alph*}]
\item If $v_p(x_m)\leq V_p$, then \label{item:symmetricpolynomialvaluationcase1}
\[
v_p(\sigma_{m-1}(x_1,\dotsc,x_m))=v_p(x_1\dotsm x_{m})-V_p+v_p\vastl(\sum_{i=1}^{m}p^{V_p-v_p(x_i)}\prod_{\substack{j=1\\j\neq i}}^{m}\frac{x_j}{p^{v_p(x_j)}}\vastr).
\]
\item If $v_p(x_m)\geq V_p$, then \label{item:symmetricpolynomialvaluationcase2}
\[
v_p(\sigma_{m-1}(x_1,\dotsc,x_m))=v_p(x_1\dotsm x_{m-1})+v_p\vastl(\sum_{i=1}^{m}p^{v_p(x_m)-v_p(x_i)}\prod_{\substack{j=1\\j\neq i}}^{m}\frac{x_j}{p^{v_p(x_j)}}\vastr).
\]
\item If $v_p(x_m)> V_p$, then $v_p(\sigma_{m-1}(x_1,\dotsc,x_m))=v_p(x_1\dotsm x_{m-1})$. \label{item:symmetricpolynomialvaluationcase3}
\end{enumerate}
\end{lemma}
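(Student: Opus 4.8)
The plan is to derive all three parts directly from Lemma~\ref{lem:symmetricpolynomialvaluation}, whose only input is the quantity $W_p=\max_{i\in[m]}\{v_p(x_i)\}$. The one observation that drives everything is that $W_p=\max\{V_p,v_p(x_m)\}$, so the hypothesis of each part pins down $W_p$ in terms of the data appearing in that part's conclusion. (Here one may as well assume $m\geq2$, since for $m=1$ the set $[m-1]$ is empty; this is the only range in which the lemma is used.)

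For part~(\ref{item:symmetricpolynomialvaluationcase1}), the assumption $v_p(x_m)\leq V_p$ gives $W_p=V_p$, and substituting $W_p=V_p$ into the formula of Lemma~\ref{lem:symmetricpolynomialvaluation} is verbatim the claimed identity. For part~(\ref{item:symmetricpolynomialvaluationcase2}), the assumption $v_p(x_m)\geq V_p$ gives $W_p=v_p(x_m)$; substituting this into Lemma~\ref{lem:symmetricpolynomialvaluation} and using $v_p(x_1\dotsm x_m)-v_p(x_m)=v_p(x_1\dotsm x_{m-1})$ yields the claim. So the first two parts are pure bookkeeping, and they agree harmlessly on the overlap $v_p(x_m)=V_p$.

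Part~(\ref{item:symmetricpolynomialvaluationcase3}) is the one with actual content. Since $v_p(x_m)>V_p$ implies $v_p(x_m)\geq V_p$, part~(\ref{item:symmetricpolynomialvaluationcase2}) applies, so it suffices to show that the sum
\[
S\coloneqq\sum_{i=1}^{m}p^{v_p(x_m)-v_p(x_i)}\prod_{\substack{j=1\\j\neq i}}^{m}\frac{x_j}{p^{v_p(x_j)}}
\]
satisfies $v_p(S)=0$, i.e.\ $p\nmid S$. I would isolate the summand $i=m$ from the rest: for $i=m$ the exponent $v_p(x_m)-v_p(x_m)$ is~$0$, and the remaining product $\prod_{j\neq m}x_j/p^{v_p(x_j)}$ is a product of integers each coprime to~$p$, hence coprime to~$p$; while for each $i\in[m-1]$ the exponent is $v_p(x_m)-v_p(x_i)\geq v_p(x_m)-V_p\geq1$, so that summand is divisible by~$p$. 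Hence $S$ is congruent modulo~$p$ to a $p$-adic unit, giving $v_p(S)=0$ and therefore $v_p(\sigma_{m-1}(x_1,\dotsc,x_m))=v_p(x_1\dotsm x_{m-1})$.

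I do not expect a real obstacle: the only care needed is in tracking $W_p$ versus $V_p$ under the three hypotheses, and in the (trivial but essential) observation in part~(\ref{item:symmetricpolynomialvaluationcase3}) that exactly one term of~$S$ survives reduction modulo~$p$.
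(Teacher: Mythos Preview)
Your proof is correct and matches the paper's argument essentially verbatim: both parts~(\ref{item:symmetricpolynomialvaluationcase1}) and~(\ref{item:symmetricpolynomialvaluationcase2}) come from identifying $W_p$ with $V_p$ or $v_p(x_m)$ in Lemma~\ref{lem:symmetricpolynomialvaluation}, and part~(\ref{item:symmetricpolynomialvaluationcase3}) is obtained from~(\ref{item:symmetricpolynomialvaluationcase2}) by observing that the $i=m$ summand is a $p$-adic unit while every other summand is divisible by~$p$.
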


\begin{proof}
As in Lemma~\ref{lem:symmetricpolynomialvaluation}, let $W_p=\max_{i\in[m]}\{v_p(x_i)\}$. When $v_p(x_m)\leq V_p$, we have $W_p=V_p$. Applying Lemma~\ref{lem:symmetricpolynomialvaluation} then yields~(\ref{item:symmetricpolynomialvaluationcase1}). When $v_p(x_m)\geq V_p$, we have $W_p=v_p(x_m)$. Applying Lemma~\ref{lem:symmetricpolynomialvaluation} then yields~(\ref{item:symmetricpolynomialvaluationcase2}). In the special case of~(\ref{item:symmetricpolynomialvaluationcase2}) in which $v_p(x_m)> V_p$, we have that $v_p(x_i)<v_p(x_m)$ for all $i\in[m-1]$. Therefore
\[
p^{v_p(x_m)-v_p(x_i)}\prod_{\substack{j=1\\j\neq i}}^{m}\frac{x_j}{p^{v_p(x_j)}}
\]
is divisible by~$p$ for all $i\neq m$ but not divisible by~$p$ when $i=m$. Hence 
\[
v_p\vastl(\sum_{i=1}^{m}p^{v_p(x_m)-v_p(x_i)}\prod_{\substack{j=1\\j\neq i}}^{m}\frac{x_j}{p^{v_p(x_j)}}\vastr)=0,
\]
showing~(\ref{item:symmetricpolynomialvaluationcase3}).
\end{proof}

We use Lemma~\ref{lem:symmetricpolynomialvaluationcases} to establish bounds on $p$-adic valuations of greatest common divisors of the numerator and denominator of~\eqref{eq:generalfraction} in terms of $x_1,\dotsc,x_{m-1}$.

\begin{theorem}\label{thm:commonfactorbounds}
Let $x_1,\dotsc,x_{m}$ be integers, and let 
\[
y=\gcd(\sigma_{m-1}(x_1,\dotsc,x_m),x_1\dotsm x_m).
\]
If $p$ is a prime number and $V_p=\max_{i\in[m-1]}\{v_p(x_i)\}$, then
\begin{enumerate}[label=\textup{(\alph*)},ref=\textup{\alph*}]
\item $v_p(y)\geq v_p(x_1\dotsm x_{m-1})-V_p$ and \label{item:commonfactorlowerbound}
\item $v_p(y)\leq v_p(x_1\dotsm x_{m-1})+V_p$. \label{item:commonfactorupperbound}
\end{enumerate}
\end{theorem}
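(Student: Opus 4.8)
The key fact to exploit is that $v_p(y) = \min\{v_p(\sigma_{m-1}(x_1,\dotsc,x_m)),\, v_p(x_1\dotsm x_m)\}$, so bounding $v_p(y)$ reduces to bounding $v_p(\sigma_{m-1}(x_1,\dotsc,x_m))$ from below and above, together with the trivial bound $v_p(y) \le v_p(x_1\dotsm x_m) = v_p(x_1\dotsm x_{m-1}) + v_p(x_m)$. I would split into the three cases organized exactly as in Lemma~\ref{lem:symmetricpolynomialvaluationcases}, according to whether $v_p(x_m) < V_p$, $v_p(x_m) = V_p$, or $v_p(x_m) > V_p$. I should also note at the outset that the statement is vacuous or trivial when $y = 0$ (i.e.\ when some $x_i = 0$), so I may assume all $x_i$ are nonzero and $y$ is a genuine positive integer.

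**The upper bound (\ref{item:commonfactorupperbound}).** For this I use $v_p(y) \le v_p(\sigma_{m-1}(x_1,\dotsc,x_m))$. In case (\ref{item:symmetricpolynomialvaluationcase3}), Lemma~\ref{lem:symmetricpolynomialvaluationcases}(\ref{item:symmetricpolynomialvaluationcase3}) gives $v_p(\sigma_{m-1}) = v_p(x_1\dotsm x_{m-1}) \le v_p(x_1\dotsm x_{m-1}) + V_p$ since $V_p \ge 0$. In cases (\ref{item:symmetricpolynomialvaluationcase1}) and (\ref{item:symmetricpolynomialvaluationcase2}), I instead use $v_p(y) \le v_p(x_1\dotsm x_m)$. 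When $v_p(x_m) \le V_p$ this already gives $v_p(y) \le v_p(x_1\dotsm x_{m-1}) + v_p(x_m) \le v_p(x_1\dotsm x_{m-1}) + V_p$, so the upper bound is immediate there. The only genuinely remaining case for the upper bound is $v_p(x_m) > V_p$, which is case (\ref{item:symmetricpolynomialvaluationcase3}), already handled. So the upper bound is short.

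**The lower bound (\ref{item:commonfactorlowerbound}).** Here I need $v_p(\sigma_{m-1}(x_1,\dotsc,x_m)) \ge v_p(x_1\dotsm x_{m-1}) - V_p$ and also $v_p(x_1\dotsm x_m) \ge v_p(x_1\dotsm x_{m-1}) - V_p$; the latter is clear since $v_p(x_m) \ge 0 \ge -V_p$. For the numerator: in case (\ref{item:symmetricpolynomialvaluationcase1}) Lemma~\ref{lem:symmetricpolynomialvaluationcases}(\ref{item:symmetricpolynomialvaluationcase1}) writes $v_p(\sigma_{m-1}) = v_p(x_1\dotsm x_m) - V_p + (\text{nonnegative term})$, and since $v_p(x_1\dotsm x_m) \ge v_p(x_1\dotsm x_{m-1})$ this is $\ge v_p(x_1\dotsm x_{m-1}) - V_p$. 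In case (\ref{item:symmetricpolynomialvaluationcase2}), Lemma~\ref{lem:symmetricpolynomialvaluationcases}(\ref{item:symmetricpolynomialvaluationcase2}) writes $v_p(\sigma_{m-1}) = v_p(x_1\dotsm x_{m-1}) + (\text{nonnegative term}) \ge v_p(x_1\dotsm x_{m-1}) \ge v_p(x_1\dotsm x_{m-1}) - V_p$, and case (\ref{item:symmetricpolynomialvaluationcase3}) is the same. So every case follows once I confirm the two valuation-sum expressions in Lemma~\ref{lem:symmetricpolynomialvaluationcases} are nonnegative integers, which they are since they are $p$-adic valuations of positive integers (each summand being a product of a nonnegative power of $p$ with the prime-to-$p$ parts of the $x_j$).

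**Main obstacle.** There is no deep obstacle; the work is entirely in correctly bookkeeping the three cases and being careful that when I write $v_p(y) \le v_p(\sigma_{m-1})$ versus $v_p(y) \le v_p(x_1\dotsm x_m)$ I pick whichever is convenient in each case. The one subtlety to get right is that the "correction terms" in Lemma~\ref{lem:symmetricpolynomialvaluationcases}(\ref{item:symmetricpolynomialvaluationcase1})--(\ref{item:symmetricpolynomialvaluationcase2}) are valuations of integer sums and hence $\ge 0$, which is what makes the lower bound work; I would state this explicitly rather than leave it implicit. I would organize the final write-up as: dispose of the $y=0$ case, prove (\ref{item:commonfactorupperbound}) using the case split (most cases reducing to $v_p(y)\le v_p(x_1\dotsm x_m)$, with $v_p(x_m)>V_p$ handled by Lemma~\ref{lem:symmetricpolynomialvaluationcases}(\ref{item:symmetricpolynomialvaluationcase3})), then prove (\ref{item:commonfactorlowerbound}) using Lemma~\ref{lem:symmetricpolynomialvaluationcases}(\ref{item:symmetricpolynomialvaluationcase1})--(\ref{item:symmetricpolynomialvaluationcase3}) together with nonnegativity of the correction terms and of $v_p(x_m)$.
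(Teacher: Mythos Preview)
Your argument is correct, and for part~(\ref{item:commonfactorupperbound}) it is essentially identical to the paper's: both split on whether $v_p(x_m)\le V_p$ (using $v_p(y)\le v_p(x_1\dotsm x_m)$) or $v_p(x_m)>V_p$ (using Lemma~\ref{lem:symmetricpolynomialvaluationcases}(\ref{item:symmetricpolynomialvaluationcase3})).

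For part~(\ref{item:commonfactorlowerbound}) you take a slightly different route. You invoke the case decomposition of Lemma~\ref{lem:symmetricpolynomialvaluationcases} and use nonnegativity of the ``correction'' valuation in each formula. The paper instead argues directly, without Lemma~\ref{lem:symmetricpolynomialvaluationcases}: each monomial $x_1\dotsm x_{j-1}x_{j+1}\dotsm x_m$ of $\sigma_{m-1}$ has $p$-adic valuation at least $v_p(x_1\dotsm x_{m-1})-V_p$ (for $j=m$ this is $v_p(x_1\dotsm x_{m-1})$; for $j\in[m-1]$ it is $v_p(x_1\dotsm x_m)-v_p(x_j)\ge v_p(x_1\dotsm x_{m-1})-V_p$), so the sum does too. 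The paper's version is marginally more elementary since it avoids the case split and the lemma; your version has the advantage of reusing machinery already in place and making the structure of the two bounds parallel. Either write-up is fine. One small point: Lemma~\ref{lem:symmetricpolynomialvaluationcases} as stated assumes \emph{positive} integers, while the theorem allows arbitrary integers; if you go your route you should note (as the paper implicitly does when it invokes part~(\ref{item:symmetricpolynomialvaluationcase3}) for the upper bound) that the lemma's proof works verbatim for nonzero integers.
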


\begin{proof}
Fix $p$. We first show~(\ref{item:commonfactorlowerbound}). We have 
\[
v_p(x_1\dotsm x_{m})\geq v_p(x_1\dotsm x_{m-1})\geq v_p(x_1\dotsm x_{m-1})-V_p.
\]
For all $j\in[m-1]$, we also have 
\[
v_p(x_1\dotsm x_{j-1}x_{j+1}\dotsm x_{m})\geq v_p(x_1\dotsm x_{m})-V_p
\geq v_p(x_1\dotsm x_{m-1})-V_p.
\]
This implies that
\[
\begin{split}
v_p(\sigma_{m-1}(x_1,\dotsc,x_m)) &\geq \min_{j\in[m]}\{v_p(x_1\dotsm x_{j-1}x_{j+1}\dotsm x_{m})\}\\
&\geq v_p(x_1\dotsm x_{m-1})-V_p.
\end{split}
\]
It follows that
\[
v_p(y)=\min\{v_p(\sigma_{m-1}(x_1,\dotsc,x_m)),v_p(x_1\dotsm x_m)\}\geq v_p(x_1\dotsm x_{m-1})-V_p,
\]
completing the proof of~(\ref{item:commonfactorlowerbound}).

To prove~(\ref{item:commonfactorupperbound}), we consider two cases: when $v_p(x_m)\leq V_p$ and when $v_p(x_m)>V_p$. When $v_p(x_m)\leq V_p$, we have $v_p(y)\leq v_p(x_1\dotsm x_m)\leq v_p(x_1\dotsm x_{m-1})+V_p$. When $v_p(x_m)>V_p$, Lemma \ref{lem:symmetricpolynomialvaluationcases}(\ref{item:symmetricpolynomialvaluationcase3}) gives that $v_p(\sigma_{m-1}(x_1,\dotsc,x_m))=v_p(x_1\dotsm x_{m-1})$. Therefore $v_p(y)=v_p(x_1\dotsm x_{m-1})\leq v_p(x_1\dotsm x_{m-1})+V_p$.
\end{proof}

We record a corollary in the case $m=2$. 

\begin{corollary}\label{cor:commonfactorboundslength2}
If $x_1$ and~$x_2$ are integers and $y=\gcd(x_1+x_2,x_1x_2)$, then $y$ divides~$x_1^2$.
\end{corollary}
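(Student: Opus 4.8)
The plan is to deduce this directly from Theorem~\ref{thm:commonfactorbounds} by setting $m=2$. When $m=2$, the numerator $\sigma_{m-1}(x_1,\dotsc,x_m)$ appearing in that theorem is $\sigma_1(x_1,x_2)=x_1+x_2$, so the integer $y=\gcd(x_1+x_2,x_1x_2)$ of the corollary is exactly the $y$ of the theorem. First I would fix a prime~$p$. Since $m-1=1$, the quantity $V_p=\max_{i\in[m-1]}\{v_p(x_i)\}$ is simply $v_p(x_1)$, and likewise $v_p(x_1\dotsm x_{m-1})=v_p(x_1)$. Then part~(\ref{item:commonfactorupperbound}) of Theorem~\ref{thm:commonfactorbounds} gives $v_p(y)\leq v_p(x_1)+v_p(x_1)=2v_p(x_1)=v_p(x_1^2)$.

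Since $v_p(y)\leq v_p(x_1^2)$ holds for every prime~$p$, it follows that $y$ divides~$x_1^2$, which is the claim. (In the degenerate case $x_1=0$ this is vacuous, as every integer divides~$0$.) There is essentially no obstacle here; the only thing requiring care is the bookkeeping of how the indices in Theorem~\ref{thm:commonfactorbounds} collapse when $m=2$, together with the observation that $v_p(x_1^2)=2v_p(x_1)$. I might also remark that one can instead argue in one line without the theorem — from the identity $x_1(x_1+x_2)-x_1x_2=x_1^2$, any common divisor of $x_1+x_2$ and $x_1x_2$ divides~$x_1^2$ — but presenting the statement as a corollary of Theorem~\ref{thm:commonfactorbounds} illustrates how the general bounds specialize in the shortest case.
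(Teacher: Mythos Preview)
Your proof is correct and follows essentially the same route as the paper's own argument: specialize Theorem~\ref{thm:commonfactorbounds} to $m=2$, note that $V_p=v_p(x_1)$ and $v_p(x_1\dotsm x_{m-1})=v_p(x_1)$, apply part~(\ref{item:commonfactorupperbound}) to get $v_p(y)\leq 2v_p(x_1)=v_p(x_1^2)$ for every prime~$p$, and conclude that $y\mid x_1^2$. Your additional remark about the direct identity $x_1(x_1+x_2)-x_1x_2=x_1^2$ is a nice aside but not part of the paper's proof.
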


\begin{proof}
When $m=2$, we have $V_p=v_p(x_1)$ in Theorem~\ref{thm:commonfactorbounds}. Part~(\ref{item:commonfactorupperbound}) then gives that $v_p(y)\leq 2v_p(x_1)=v_p(x_1^2)$. Since this is true for every prime number~$p$, it follows that $y$ divides~$x_1^2$.
\end{proof}

Continuing with the special case $m=2$, we can use Corollaries \ref{cor:fixed1length2} and~\ref{cor:commonfactorboundslength2} to find the reduced form of all rational numbers whose odd greedy expansion has length~$2$ and begins with a fixed odd denominator.

\begin{proposition}\label{prop:fixed1length2reduced}
Let $x_1$ be an odd positive integer. Any rational number whose odd greedy expansion has length~$2$ and begins with denominator~$x_1$ has reduced form of the form
\[
\frac{2\ceil[\Big]{\frac{x_1^2+3}{4y}}+2u}{2x_1\ceil[\Big]{\frac{x_1^2+3}{4y}}-\frac{x_1^2}{y}+2x_1u},
\]
where $y$ is any divisor of~$x_1^2$ and $u$ is any nonnegative integer.
\end{proposition}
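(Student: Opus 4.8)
The plan is to take the parametrization of these rational numbers supplied by Corollary~\ref{cor:fixed1length2}, cancel the greatest common factor of numerator and denominator (which Corollary~\ref{cor:commonfactorboundslength2} forces to divide $x_1^2$), and then read off the reduced form. First I would record that, since $x_1$ is odd, $x_1^2\equiv1\pmod{8}$, so $4\mid x_1^2+3$; hence $M\coloneqq(x_1^2+3)/4+t$ is an integer, it is at least $(x_1^2+3)/4$, and as $t$ runs over the nonnegative integers it takes exactly the integer values that are at least $(x_1^2+3)/4$. A direct computation then shows that the fraction $\frac{(x_1^2+3)/2+2t}{(x_1^3+3x_1)/2-x_1^2+2x_1t}$ of Corollary~\ref{cor:fixed1length2} equals $\frac{2M}{2x_1M-x_1^2}$.

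Next I would perform the cancellation. Setting $x_2=2M-x_1$, one checks $2M=x_1+x_2$ and $2x_1M-x_1^2=x_1x_2$, so Corollary~\ref{cor:commonfactorboundslength2} gives that $y\coloneqq\gcd(2M,2x_1M-x_1^2)$ divides $x_1^2$; in particular $y$ is odd, and since $y\mid2M$ it follows that $y\mid M$. (Alternatively, $y=\gcd(2M,x_1^2)=\gcd(M,x_1^2)$ can be obtained directly from the Euclidean algorithm, using $2x_1M-x_1^2\equiv-x_1^2\pmod{2M}$ and that $x_1^2$ is odd.) Since $y\mid x_1^2$ and $y\mid M$, the reduced form of our rational number is $\frac{2(M/y)}{2x_1(M/y)-x_1^2/y}$, where $M/y$ and $x_1^2/y$ are integers.

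Finally I would match this with the claimed expression. Because $M\ge(x_1^2+3)/4$ and $M/y$ is an integer, $M/y\ge\ceil{(x_1^2+3)/(4y)}$, so $u\coloneqq M/y-\ceil{(x_1^2+3)/(4y)}$ is a nonnegative integer; substituting $M/y=\ceil{(x_1^2+3)/(4y)}+u$ into the reduced form above yields precisely the asserted fraction, with $y$ a divisor of $x_1^2$ and $u$ a nonnegative integer. The computations are routine; the only points needing a little care are the divisibility $4\mid x_1^2+3$ that makes $M$ an integer and the fact that $y$ must be taken to be the \emph{actual} greatest common divisor (not merely some divisor of $x_1^2$) for the displayed fraction to be in lowest terms. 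I do not anticipate a genuine obstacle.
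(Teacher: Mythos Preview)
Your proposal is correct and follows essentially the same approach as the paper: start from the parametrization of Corollary~\ref{cor:fixed1length2}, invoke Corollary~\ref{cor:commonfactorboundslength2} to see that the gcd $y$ divides $x_1^2$, determine which numerators are multiples of $y$, divide through, and rewrite using the ceiling. Your introduction of $M=(x_1^2+3)/4+t$ (via the observation $4\mid x_1^2+3$) is a mild algebraic convenience that streamlines the substitution step, but the strategy and the key inputs are identical to the paper's.
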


\begin{proof}
We know from Corollary~\ref{cor:fixed1length2} that every rational number whose odd greedy expansion has length~$2$ and begins with~$x_1$ is of the form
\[
\frac{(x_1^2+3)/2+2t}{(x_1^3+3x_1)/2-x_1^2+2x_1t},
\]
and we know from Corollary~\ref{cor:commonfactorboundslength2} that the possible greatest common divisors of the numerator and denominator of this fraction are divisors of $x_1^2$. Let $y$ be a divisor of~$x_1^2$. We will show that the fractions from Corollary~\ref{cor:fixed1length2} for which $y$ is the greatest common divisor of the numerator and denominator have reduced form of the form given in the statement.

Since the numerator $(x_1^2+3)/2+2t$ is even and $y$ is odd, the values of the numerator that are divisible by~$y$ are precisely the multiples of $2y$ that are at least $(x_1^2+3)/2$. These are exactly the integers of the form
\[
2y\ceil[\bigg]{\frac{(x_1^2+3)/2}{2y}}+2yu = 2y\ceil[\bigg]{\frac{x_1^2+3}{4y}}+2yu,
\]
where $u$ is any nonnegative integer. Setting the numerator equal to this, we have 
\[
\frac{x_1^2+3}{2}+2t = 2y\ceil[\bigg]{\frac{x_1^2+3}{4y}}+2yu,
\]
and hence
\[
t=y\ceil[\bigg]{\frac{x_1^2+3}{4y}}-\frac{x_1^2+3}{4}+yu.
\]
Substituting this into the fraction, we have that it must be of the form 
\[
\frac{2y\ceil[\Big]{\frac{x_1^2+3}{4y}}+2yu}{\frac{x_1^3+3x_1}{2}-x_1^2+2x_1\Bigl(y\ceil[\Big]{\frac{x_1^2+3}{4y}}-\frac{x_1^2+3}{4}+yu\Bigr)}=\frac{2y\ceil[\Big]{\frac{x_1^2+3}{4y}}+2yu}{2x_1y\ceil[\Big]{\frac{x_1^2+3}{4y}}-x_1^2+2x_1yu}.
\]
Dividing the numerator and denominator by~$y$, we get that all fractions from Corollary~\ref{cor:fixed1length2} for which the greatest common divisor of the numerator and denominator is~$y$ have reduced form of the form
\[
\frac{2\ceil[\Big]{\frac{x_1^2+3}{4y}}+2u}{2x_1\ceil[\Big]{\frac{x_1^2+3}{4y}}-\frac{x_1^2}{y}+2x_1u}.\qedhere
\]
\end{proof}

We consider an example that finds the reduced forms of the fractions from Example~\ref{ex:fixed1length2b}.

\begin{example}
When $x_1=5$, the possible values of~$y$ are $1$, $5$, and $25$. Every rational number with odd greedy expansion of length~$2$ and first denominator~$5$ has reduced form of one of the following forms.
\begin{align*}
y=1: &\qquad \frac{2\ceil[\Big]{\frac{5^2+3}{4(1)}}+2u}{2(5)\ceil[\Big]{\frac{5^2+3}{4(1)}}-\frac{5^2}{1}+2(5)u} = \frac{14+2u}{45+10u}\\
y=5: &\qquad \frac{2\ceil[\Big]{\frac{5^2+3}{4(5)}}+2u}{2(5)\ceil[\Big]{\frac{5^2+3}{4(5)}}-\frac{5^2}{5}+2(5)u} = \frac{4+2u}{15+10u}\\
y=25: &\qquad \frac{2\ceil[\Big]{\frac{5^2+3}{4(25)}}+2u}{2(5)\ceil[\Big]{\frac{5^2+3}{4(25)}}-\frac{5^2}{25}+2(5)u} = \frac{2+2u}{9+10u}\\
\end{align*}
\end{example}

Note that the fraction in Proposition~\ref{prop:fixed1length2reduced} need not be reduced for all $y$ and~$u$; we only claim that the reduced form of every rational number whose odd greedy expansion has length~$2$ and begins with denominator~$x_1$ is of this form for some $y$ and~$u$.

\subsection{When constraints on greatest common divisors are sharp}
The bounds of Theorem~\ref{thm:commonfactorbounds} can be attained and hence cannot be improved in general. However, given $x_1,\dotsc,x_{m-1}$ there may or may not exist an $x_m$ for which the bounds are attained. In this subsection, we characterize when there is such an $x_m$, first working with one prime number and then extending to all prime numbers. Throughout, we let $y$ denote $\gcd(\sigma_{m-1}(x_1,\dotsc,x_m),x_1\dotsm x_m)$. 

\begin{proposition}\label{prop:extremalvaluationlocal}
Let $x_1,\dotsc,x_{m-1}$ be odd positive integers, let $p$ be a prime number that divides $x_i$ for some $i\in[m-1]$, and let $V_p=\max_{i\in[m-1]}\{v_p(x_i)\}$. The following statements are equivalent: 
\begin{enumerate}[label=\textup{(\alph*)},ref=\textup{\alph*}]
\item There exists an odd positive integer~$x_m$ with $v_p(y)=v_p(x_1\dotsm x_{m-1})-V_p$. \label{item:lowervaluationlocal}
\item There exists an odd positive integer~$x_m$ with $v_p(y)=v_p(x_1\dotsm x_{m-1})+V_p$. \label{item:uppervaluationlocal}
\item $p$ does not divide \label{item:valuationcriterionlocal}
\begin{equation}
\label{eq:extremalvaluationcondition}
\sum_{i=1}^{m-1}p^{V_p-v_p(x_i)}\prod_{\substack{j=1\\j\neq i}}^{m-1}\frac{x_j}{p^{v_p(x_j)}}.
\end{equation}
\end{enumerate}
\end{proposition}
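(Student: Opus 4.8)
The plan is to fix the prime $p$ throughout and build everything on the identity
\[
\sigma_{m-1}(x_1,\dotsc,x_m)=x_1\dotsm x_{m-1}+x_m\sigma_{m-2}(x_1,\dotsc,x_{m-1}),
\]
together with the remark that, since $x_1,\dotsc,x_{m-1}$ are odd and $p$ divides one of them, $p$ is an odd prime and $V_p\geq1$. Write $A=v_p(x_1\dotsm x_{m-1})$ and $u=\prod_{j=1}^{m-1}x_j/p^{v_p(x_j)}$, so $p\nmid u$ and $x_1\dotsm x_{m-1}=p^{A}u$. Factoring $p^{A-V_p}$ out of $\sigma_{m-2}(x_1,\dotsc,x_{m-1})=\sum_{i=1}^{m-1}\prod_{j\neq i}x_j$ gives $\sigma_{m-2}(x_1,\dotsc,x_{m-1})=p^{A-V_p}S$, where $S$ is the sum in~\eqref{eq:extremalvaluationcondition}; hence $v_p(\sigma_{m-2}(x_1,\dotsc,x_{m-1}))=A-V_p+v_p(S)$, and condition~(\ref{item:valuationcriterionlocal}) is exactly the statement $v_p(S)=0$. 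Substituting into the identity, for every odd positive integer $x_m$ we obtain $\sigma_{m-1}(x_1,\dotsc,x_m)=p^{A-V_p}(p^{V_p}u+x_mS)$, so that
\[
v_p(\sigma_{m-1}(x_1,\dotsc,x_m))=A-V_p+v_p\bigl(p^{V_p}u+x_mS\bigr),
\]
while $v_p(x_1\dotsm x_m)=A+v_p(x_m)$, and $v_p(y)$ is the minimum of these two.

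Next I would prove both implications out of~(\ref{item:valuationcriterionlocal}). Assume $p\nmid S$. Taking $x_m=1$ and using $V_p\geq1$ gives $p^{V_p}u+S\equiv S\not\equiv0\pmod p$, so $v_p(\sigma_{m-1}(x_1,\dotsc,x_m))=A-V_p$; combined with Theorem~\ref{thm:commonfactorbounds}(\ref{item:commonfactorlowerbound}) this forces $v_p(y)=A-V_p$, which is~(\ref{item:lowervaluationlocal}). For~(\ref{item:uppervaluationlocal}), since $p$ is odd, $p^{2V_p}$ is odd, and $S$ is invertible modulo $p^{2V_p}$, so the congruence $x_m\equiv-p^{V_p}uS^{-1}\pmod{p^{2V_p}}$ has infinitely many odd positive solutions; for any such $x_m$ we get $v_p(p^{V_p}u+x_mS)\geq2V_p$ and (from the shape of the representative, using $V_p<2V_p$) $v_p(x_m)=V_p$. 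Then $v_p(\sigma_{m-1}(x_1,\dotsc,x_m))\geq A+V_p$ and $v_p(x_1\dotsm x_m)=A+V_p$, so $v_p(y)\geq A+V_p$; Theorem~\ref{thm:commonfactorbounds}(\ref{item:commonfactorupperbound}) then yields $v_p(y)=A+V_p$, which is~(\ref{item:uppervaluationlocal}).

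Finally I would prove the contrapositives of~(\ref{item:lowervaluationlocal})$\Rightarrow$(\ref{item:valuationcriterionlocal}) and~(\ref{item:uppervaluationlocal})$\Rightarrow$(\ref{item:valuationcriterionlocal}) at once, assuming $p\mid S$. For~(\ref{item:lowervaluationlocal}): for every odd $x_m$, both $p^{V_p}u$ and $x_mS$ are divisible by $p$, so $v_p(\sigma_{m-1}(x_1,\dotsc,x_m))\geq A-V_p+1>A-V_p$, ruling out~(\ref{item:lowervaluationlocal}). For~(\ref{item:uppervaluationlocal}): split on $v_p(x_m)$ versus $V_p$. If $v_p(x_m)>V_p$, Lemma~\ref{lem:symmetricpolynomialvaluationcases}(\ref{item:symmetricpolynomialvaluationcase3}) gives $v_p(\sigma_{m-1}(x_1,\dotsc,x_m))=A$, so $v_p(y)\leq A<A+V_p$. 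If $v_p(x_m)<V_p$, then $v_p(x_1\dotsm x_m)=A+v_p(x_m)<A+V_p$, so $v_p(y)<A+V_p$. If $v_p(x_m)=V_p$, then $p^{V_p}u$ and $x_mS$ have valuations $V_p$ and $V_p+v_p(S)\geq V_p+1$, hence $v_p(\sigma_{m-1}(x_1,\dotsc,x_m))=A-V_p+V_p=A$, so again $v_p(y)\leq A<A+V_p$. In every case~(\ref{item:uppervaluationlocal}) fails, completing the cycle. The main obstacle is the bookkeeping in this last three-way case analysis — ensuring each possibility for $v_p(x_m)$ is handled and that the valuation of $p^{V_p}u+x_mS$ is pinned down correctly — but everything becomes routine once the factorization $\sigma_{m-1}(x_1,\dotsc,x_m)=p^{A-V_p}(p^{V_p}u+x_mS)$ is available.
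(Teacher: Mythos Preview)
Your proof is correct and follows essentially the same strategy as the paper's: both arguments hinge on isolating the factor $p^{A-V_p}$ from $\sigma_{m-1}(x_1,\dotsc,x_m)$ and then controlling the residual factor $p^{V_p}u+x_mS$, with a case split on $v_p(x_m)$ relative to $V_p$ for the upper-bound direction. The only notable difference is organizational: you obtain the key factorization directly from the recurrence $\sigma_{m-1}(x_1,\dotsc,x_m)=x_1\dotsm x_{m-1}+x_m\sigma_{m-2}(x_1,\dotsc,x_{m-1})$, whereas the paper routes through Lemmas~\ref{lem:symmetricpolynomialvaluation} and~\ref{lem:symmetricpolynomialvaluationcases}; your route is slightly more self-contained for this proposition, while the paper's lemmas are reusable elsewhere. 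One small point worth tightening: in the step ``ruling out~(\ref{item:lowervaluationlocal})'' you conclude from $v_p(\sigma_{m-1})>A-V_p$ alone, but $v_p(y)$ is a minimum, so you should also note (as you implicitly use) that $v_p(x_1\dotsm x_m)=A+v_p(x_m)\geq A>A-V_p$; this is immediate from $V_p\geq1$ but deserves a word.
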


\begin{proof}
We first prove that~(\ref{item:lowervaluationlocal}) is equivalent to~(\ref{item:valuationcriterionlocal}). Since
\[
y=\gcd(\sigma_{m-1}(x_1,\dotsc,x_m),x_1\dotsm x_m),
\]
we have
\[
v_p(y)=\min\{v_p(\sigma_{m-1}(x_1,\dotsc,x_m)),v_p(x_1\dotsm x_{m})\}.
\]
Since $V_p\geq1$, it is always the case that $v_p(x_1\dotsm x_{m})>v_p(x_1\dotsm x_{m-1})-V_p$, i.e.\ $v_p(x_1\dotsm x_{m})$ is always larger than the bound of Theorem~\ref{thm:commonfactorbounds}. Therefore the condition $v_p(y)=v_p(x_1\dotsm x_{m-1})-V_p$ is equivalent to
\[
v_p(\sigma_{m-1}(x_1,\dotsc,x_m))=v_p(x_1\dotsm x_{m-1})-V_p.
\]
By Lemma \ref{lem:symmetricpolynomialvaluationcases}(\ref{item:symmetricpolynomialvaluationcase1}), this holds if and only if neither $x_m$ nor
\[
\sum_{i=1}^{m}p^{V_p-v_p(x_i)}\prod_{\substack{j=1\\j\neq i}}^{m}\frac{x_j}{p^{v_p(x_j)}}
\]
is divisible by~$p$. When $v_p(x_m)=0$, the assumption $V_p\geq1$ implies that the $i=m$ term of this sum is divisible by~$p$, so therefore the above is equivalent to requiring that 
\[
\vastl(\sum_{i=1}^{m-1}p^{V_p-v_p(x_i)}\prod_{\substack{j=1\\j\neq i}}^{m-1}\frac{x_j}{p^{v_p(x_j)}}\vastr)x_m
\]
is not divisible by~$p$. Hence if $v_p(y)=v_p(x_1\dotsm x_{m-1})-V_p$ it must be the case that~\eqref{eq:extremalvaluationcondition} is not divisible by~$p$. Conversely, if~\eqref{eq:extremalvaluationcondition} is not divisible by~$p$, if we let $x_m$ be any odd positive integer that is not divisible by~$p$, we have that
\[
\vastl(\sum_{i=1}^{m-1}p^{V_p-v_p(x_i)}\prod_{\substack{j=1\\j\neq i}}^{m-1}\frac{x_j}{p^{v_p(x_j)}}\vastr)x_m
\]
is not divisible by~$p$. 

We next prove that~(\ref{item:uppervaluationlocal}) is equivalent to~(\ref{item:valuationcriterionlocal}). If $v_p(x_m)<V_p$, then
\[
v_p(y)\leq v_p(x_1\dotsm x_{m})<v_p(x_1\dotsm x_{m-1})+V_p.
\]
If $v_p(x_m)>V_p$, then it follows from Lemma \ref{lem:symmetricpolynomialvaluationcases}(\ref{item:symmetricpolynomialvaluationcase3}) that 
\[
v_p(y)\leq v_p(\sigma_{m-1}(x_1,\dotsc,x_m))=v_p(x_1\dotsm x_{m-1})<v_p(x_1\dotsm x_{m-1})+V_p.
\]
Together these show~(\ref{item:uppervaluationlocal}) is only possible if $v_p(x_m)=V_p$, so it suffices to show that~(\ref{item:uppervaluationlocal}) is equivalent to~(\ref{item:valuationcriterionlocal}) under the assumption $v_p(x_m)=V_p$. In this case, we have
\[
v_p(x_1\dotsm x_{m})=v_p(x_1\dotsm x_{m-1})+V_p.
\]
Therefore $v_p(y)=v_p(x_1\dotsm x_{m-1})+V_p$ if and only if 
\[
v_p(\sigma_{m-1}(x_1,\dotsc,x_m))\geq v_p(x_1\dotsm x_{m-1})+V_p.
\]
By Lemma \ref{lem:symmetricpolynomialvaluationcases}(\ref{item:symmetricpolynomialvaluationcase2}), this happens if and only if 
\[
\sum_{i=1}^{m}p^{V_p-v_p(x_i)}\prod_{\substack{j=1\\j\neq i}}^{m}\frac{x_j}{p^{v_p(x_j)}}=\prod_{i=1}^{m-1}\frac{x_i}{p^{v_p(x_i)}}+\vastl(\sum_{i=1}^{m-1}p^{V_p-v_p(x_i)}\prod_{\substack{j=1\\j\neq i}}^{m-1}\frac{x_j}{p^{v_p(x_j)}}\vastr)\frac{x_m}{p^{V_p}}
\]
is divisible by $p^{V_p}$. This can happen if and only if the equation
\[
\vastl(\sum_{i=1}^{m-1}p^{V_p-v_p(x_i)}\prod_{\substack{j=1\\j\neq i}}^{m-1}\frac{x_j}{p^{v_p(x_j)}}\vastr)x_{m}'\equiv -\prod_{i=1}^{m-1}\frac{x_i}{p^{v_p(x_i)}}\bmod{p^{V_p}}
\]
has a solution for~$x_m'$. Since $-\prod_{i=1}^{m-1}(x_i/v_p(x_i))$ is not divisible by~$p$, this equation has a solution if and only if $p$ does not divide~\eqref{eq:extremalvaluationcondition}. Moreover, by adding a multiple of $p^{V_p}$ if necessary, this solution can be taken to be a odd positive integer, in which case $x_m=p^{V_p}x_m'$ is also a odd positive integer.
\end{proof}

We give an example where this proposition can be used to show that the bounds of Theorem~\ref{thm:commonfactorbounds} cannot be achieved.

\begin{example}
Suppose $m=3$, $x_1=5$, and $x_2=45$. For $p=5$, we have 
\[
\sum_{i=1}^{2}5^{V_5-v_5(x_i)}\prod_{\substack{j=1\\j\neq i}}^{2}\frac{x_j}{5^{v_5(x_j)}}=(5^{1-1})(9)+(5^{1-1})(1)=10,
\]
which is divisible by~$5$. Therefore Proposition~\ref{prop:extremalvaluationlocal} says there is no $x_3$ for which $v_5(y)=1$ and no $x_3$ for which $v_5(y)=3$.
\end{example}

The following lemma is useful to convert the statement of Proposition~\ref{prop:extremalvaluationlocal} for one prime number into a single statement for all relevant prime numbers. 

\begin{lemma}\label{lem:modularsolutions}
If $p_1,\dotsc,p_n$ are distinct prime numbers, $e_1,\dotsc,e_n$ are positive integers, and $a_1,\dotsc,a_n$ are integers, then there exists a positive integer~$x$ such that $x\equiv a_i\bmod{p_i^{e_i}}$ for all $i\in[n]$.
\end{lemma}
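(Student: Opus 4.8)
The plan is to apply the Chinese Remainder Theorem. First I would note that since $p_1,\dotsc,p_n$ are distinct primes, the moduli $p_1^{e_1},\dotsc,p_n^{e_n}$ are pairwise coprime: for $i\neq j$, the only prime dividing $p_i^{e_i}$ is $p_i$, which does not divide $p_j^{e_j}$. The Chinese Remainder Theorem then provides an integer $x_0$ with $x_0\equiv a_i\bmod{p_i^{e_i}}$ for all $i\in[n]$, and moreover shows that an integer satisfies all of these congruences if and only if it is congruent to $x_0$ modulo $N\coloneqq\prod_{i=1}^n p_i^{e_i}$.

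It then remains only to produce a \emph{positive} such integer, which is immediate: for any integer~$k$, the number $x_0+kN$ also satisfies $x_0+kN\equiv a_i\bmod{p_i^{e_i}}$ for all $i\in[n]$, and taking $k$ large enough (for instance any $k>-x_0/N$) makes $x=x_0+kN$ a positive integer with the desired property.

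If one prefers to avoid citing the Chinese Remainder Theorem, the same conclusion follows by induction on~$n$. The case $n=1$ is handled by any sufficiently large positive integer in the residue class of $a_1$ modulo $p_1^{e_1}$. For the inductive step, suppose $x'$ satisfies the first $n-1$ congruences; since $M\coloneqq p_1^{e_1}\dotsm p_{n-1}^{e_{n-1}}$ and $p_n^{e_n}$ are coprime, Bézout's identity gives integers $u,v$ with $uM+vp_n^{e_n}=1$, and then $x'vp_n^{e_n}+a_nuM$ is congruent to $x'$ modulo~$M$ (hence to each $a_i$ modulo $p_i^{e_i}$ for $i<n$) and congruent to $a_n$ modulo $p_n^{e_n}$; adding a suitable multiple of $Mp_n^{e_n}$ makes it positive. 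Either way, there is essentially nothing delicate here: the only point worth stating explicitly is that translating any solution by a multiple of $\prod_{i=1}^n p_i^{e_i}$ preserves all the congruences, which is exactly what lets us upgrade ``a solution'' to ``a positive solution.''
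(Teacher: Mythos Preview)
Your proof is correct and follows essentially the same route as the paper: the paper also proves this by induction on~$n$, at each step solving a single congruence modulo $p_{k+1}^{e_{k+1}}$ to extend a solution for the first $k$ congruences to one for the first $k+1$. Your explicit treatment of positivity (translating by a multiple of $\prod_i p_i^{e_i}$) is in fact slightly more careful than the paper's, which simply takes $x=b_n$ without commenting on its sign.
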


\begin{proof}
We inductively construct integers~$b_k$ such that $b_k\equiv a_i\bmod{p_i^{e_i}}$ for all $i\in[k]$. Let $b_1=a_1$. Given $b_k$, the equation $b_k+p_1^{e_1}\dotsm p_k^{e_k} z\equiv a_{k+1}\bmod{p_{k+1}^{e_{k+1}}}$ can be solved for~$z$ because $\gcd(p_1^{e_1}\dotsm p_k^{e_k}, p_{k+1}^{e_{k+1}})=1$. Letting $b_{k+1}=b_k+p_1^{e_1}\dotsm p_k^{e_k} z$, we have $b_{k+1}\equiv b_k\equiv a_i\bmod{p_i^{e_i}}$ for all $i\in[k]$ and $b_{k+1}\equiv a_{k+1}\bmod{p_{k+1}^{e_{k+1}}}$. Then $b_n$ has the desired property, so we can let $x=b_n$.
\end{proof}

We can now determine when the bounds of Theorem~\ref{thm:commonfactorbounds} are achieved for all relevant prime numbers. 

\begin{theorem}\label{thm:extremalvaluationglobal}
Let $x_1,\dotsc,x_{m-1}$ be odd positive integers. Given a prime number~$p$, let $V_p=\max_{i\in[m-1]}\{v_p(x_i)\}$. The following statements are equivalent:
\begin{enumerate}[label=\textup{(\alph*)},ref=\textup{\alph*}]
\item There exists an odd positive integer~$x_m$ such that $v_p(y)=v_p(x_1\dotsm x_{m-1})-V_p$ for every prime number~$p$ that divides $x_i$ for some $i\in[m-1]$. \label{item:lowervaluationglobal}
\item There exists an odd positive integer~$x_m$ such that $v_p(y)=v_p(x_1\dotsm x_{m-1})+V_p$ for every prime number~$p$ that divides $x_i$ for some $i\in[m-1]$. \label{item:uppervaluationglobal}
\item For every prime number~$p$ that divides $x_i$ for some $i\in[m-1]$, we have that $p$ does not divide \label{item:valuationcriterionglobal}
\begin{equation}\label{eq:valuationcriterionglobal}
\sum_{i=1}^{m-1}p^{V_p-v_p(x_i)}\prod_{\substack{j=1\\j\neq i}}^{m-1}\frac{x_j}{p^{v_p(x_j)}}.
\end{equation}
\end{enumerate}
\end{theorem}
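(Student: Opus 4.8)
The plan is to reduce this global statement to its one-prime version, Proposition~\ref{prop:extremalvaluationlocal}, and to use Lemma~\ref{lem:modularsolutions} to glue the prime-by-prime choices of $x_m$ into a single one. Write $P$ for the finite set of prime numbers dividing $x_i$ for some $i\in[m-1]$, and note that $2\notin P$ because each $x_i$ is odd; also $V_p\geq1$ for every $p\in P$.

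The implications to~(\ref{item:valuationcriterionglobal}) are immediate: if a single odd positive integer $x_m$ witnesses~(\ref{item:lowervaluationglobal}) (respectively~(\ref{item:uppervaluationglobal})), then for each fixed $p\in P$ this same $x_m$ witnesses statement~(\ref{item:lowervaluationlocal}) (respectively~(\ref{item:uppervaluationlocal})) of Proposition~\ref{prop:extremalvaluationlocal}, so that proposition gives that $p$ does not divide the sum in~\eqref{eq:valuationcriterionglobal}; as $p\in P$ was arbitrary, (\ref{item:valuationcriterionglobal}) follows. For~(\ref{item:valuationcriterionglobal})~$\Rightarrow$~(\ref{item:lowervaluationglobal}) I would take $x_m=1$, which is an odd positive integer coprime to every prime: the proof of Proposition~\ref{prop:extremalvaluationlocal} shows that, once $p$ does not divide the sum in~\eqref{eq:valuationcriterionglobal}, any odd positive $x_m$ coprime to $p$ already yields $v_p(y)=v_p(x_1\dotsm x_{m-1})-V_p$, so this one value of $x_m$ works simultaneously for all $p\in P$.

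The substantive implication is~(\ref{item:valuationcriterionglobal})~$\Rightarrow$~(\ref{item:uppervaluationglobal}), where no constant choice of $x_m$ can work, since achieving the upper bound at $p$ forces $v_p(x_m)=V_p$ exactly, together with a congruence on $x_m/p^{V_p}$ modulo $p^{V_p}$. The point I would extract from the proof of Proposition~\ref{prop:extremalvaluationlocal} (together with Lemma~\ref{lem:symmetricpolynomialvaluationcases}(\ref{item:symmetricpolynomialvaluationcase2})) is that, assuming $p$ does not divide the sum in~\eqref{eq:valuationcriterionglobal}, whether $v_p(y)=v_p(x_1\dotsm x_{m-1})+V_p$ holds depends on $x_m$ only through its residue modulo $p^{2V_p}$, and the admissible $x_m$ form exactly one such residue class $c_p$: namely the class of $p^{V_p}u_0$, where $u_0$ is the unique unit modulo $p^{V_p}$ for which $u_0$ times the sum in~\eqref{eq:valuationcriterionglobal} is congruent to $-\prod_{i=1}^{m-1}(x_i/p^{v_p(x_i)})$ modulo $p^{V_p}$. (Requiring $x_m\equiv c_p\bmod p^{2V_p}$ automatically forces $v_p(x_m)=V_p$, since $u_0$ is a unit and $V_p\geq1$.) I would then apply Lemma~\ref{lem:modularsolutions} to the distinct primes $2,p_1,\dotsc,p_n$, where $P=\{p_1,\dotsc,p_n\}$, with exponents $1,2V_{p_1},\dotsc,2V_{p_n}$ and target residues $1,c_{p_1},\dotsc,c_{p_n}$, obtaining a positive integer $x_m$ that is odd and satisfies $x_m\equiv c_p\bmod p^{2V_p}$ for every $p\in P$. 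This single $x_m$ then achieves the upper bound at every $p\in P$, establishing~(\ref{item:uppervaluationglobal}).

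I expect the main obstacle to be the bookkeeping in this last step: carefully checking that the residue class $c_p$ modulo $p^{2V_p}$ correctly encodes both the equality $v_p(x_m)=V_p$ and the modulo-$p^{V_p}$ condition coming from Proposition~\ref{prop:extremalvaluationlocal}, and confirming that imposing these classes is compatible with oddness, which it is precisely because $2\notin P$. Once that is in place, Lemma~\ref{lem:modularsolutions} closes the argument at once.
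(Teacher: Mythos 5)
Your proof is correct and follows essentially the same route as the paper: reduce to the single-prime statement of Proposition~\ref{prop:extremalvaluationlocal} and glue the prime-by-prime choices of $x_m$ with Lemma~\ref{lem:modularsolutions}. Your two refinements --- taking $x_m=1$ outright for (c)\,$\Rightarrow$\,(a) instead of invoking the Chinese-remainder step, and, for (c)\,$\Rightarrow$\,(b), imposing the congruence modulo $p^{2V_p}$ together with the residue $1$ modulo $2$ to force oddness --- are both sound, and the latter is in fact more careful than the paper's wording, which states the modulus as $p^{V_p}$ at that step.
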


\begin{proof} 
To see that~(\ref{item:lowervaluationglobal}) implies~(\ref{item:valuationcriterionglobal}), observe that if $v_p(y)=v_p(x_1\dotsm x_{m-1})-V_p$ then Proposition~\ref{prop:extremalvaluationlocal} gives that $p$ does not divide~\eqref{eq:valuationcriterionglobal}. To see that~(\ref{item:uppervaluationglobal}) implies~(\ref{item:valuationcriterionglobal}), observe that if $v_p(y)=v_p(x_1\dotsm x_{m-1})+V_p$ then Proposition~\ref{prop:extremalvaluationlocal} gives that $p$ does not divide~\eqref{eq:valuationcriterionglobal}. 

To see that~(\ref{item:valuationcriterionglobal}) implies~(\ref{item:lowervaluationglobal}), observe that if $p$ does not divide~\eqref{eq:valuationcriterionglobal} then Proposition~\ref{prop:extremalvaluationlocal} gives an odd positive integer $x_{m,p}$, determined modulo $p^{2V_p}$, for which $v_p(y)=v_p(x_1\dotsm x_{m-1})-V_p$. We can then use Lemma~\ref{lem:modularsolutions} to find an odd positive integer~$x_m$ such that $x_m\equiv x_{m,p}\bmod{p^{2V_p}}$ for all prime numbers~$p$ that divide $x_i$ for some $i\in[m-1]$. This value of~$x_m$ then has the property that $v_p(y)=v_p(x_1\dotsm x_{m-1})-V_p$ for every prime number~$p$ that divides $x_i$ for some $i\in[m-1]$.

To see that~(\ref{item:valuationcriterionglobal}) implies~(\ref{item:uppervaluationglobal}), observe that if $p$ does not divide~\eqref{eq:valuationcriterionglobal} then Proposition~\ref{prop:extremalvaluationlocal} gives an odd positive integer $x_{m,p}$, determined modulo $p^{2V_p}$, for which $v_p(y)=v_p(x_1\dotsm x_{m-1})+V_p$. We can then use Lemma~\ref{lem:modularsolutions} to find an odd positive integer~$x_m$ such that $x_m\equiv x_{m,p}\bmod{p^{V_p}}$ for all prime numbers~$p$ that divide $x_i$ for some $i\in[m-1]$. This value of~$x_m$ then has the property that $v_p(y)=v_p(x_1\dotsm x_{m-1})+V_p$ for every prime number $p$ that divides $x_i$ for some $i\in[m-1]$.
\end{proof}

\section{Length \texorpdfstring{$m$}{m} with fixed \texorpdfstring{$x_1,\dotsc,x_{m-2}$}{x\_1,...,x\_{m-2}}}\label{sec:fixedm-2}

In this section, we find infinite families of rational numbers whose odd greedy expansion has length~$m$ and begins with $m-2$ fixed compatible positive odd denominators. We begin with a lemma that shows certain bounds from~\eqref{eq:x_kbound} decrease as $x_{k-1}$ increases from one odd positive integer to another.

\begin{lemma}\label{lem:decreasing}
Let $i$, $k$, $m$, and~$t$ be positive integers with $i\leq k-2$ and $k\leq m$. If $x_1,\dotsc,x_m$ are the denominators of an odd greedy expansion, then
\[
\frac{(x_i-2)x_{i}\dotsm x_{k-1}}{2\sigma_{k-i-1}(x_i,\dotsc, x_{k-1})-x_i^2\sigma_{k-i-2}(x_{i+1},\dotsc,x_{k-1})}
\]
is greater than
\[
\frac{(x_i-2)x_{i}\dotsm x_{k-2}(x_{k-1}+2t)}{2\sigma_{k-i-1}(x_i,\dotsc, x_{k-2},x_{k-1}+2t)-x_i^2\sigma_{k-i-2}(x_{i+1},\dotsc, x_{k-2},x_{k-1}+2t)}.
\]
\end{lemma}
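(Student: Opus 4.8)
The plan is to rewrite both of the displayed expressions in a common closed form and then compare them. Using the standard identities $\sigma_r(y_1,\dotsc,y_r)=y_1\dotsm y_r$ and $\sigma_{r-1}(y_1,\dotsc,y_r)=y_1\dotsm y_r\sum_{\ell=1}^{r}1/y_\ell$ — applied to the $k-i$ variables $x_i,\dotsc,x_{k-1}$ and to the $k-i-1$ variables $x_{i+1},\dotsc,x_{k-1}$ (a nonempty list precisely because $i\le k-2$) — together with the observation that $x_i^2\sigma_{k-i-2}(x_{i+1},\dotsc,x_{k-1})=x_i(x_i\dotsm x_{k-1})\sum_{j=i+1}^{k-1}1/x_j$, the denominator appearing in the first expression factors as
\[
2\sigma_{k-i-1}(x_i,\dotsc,x_{k-1})-x_i^2\sigma_{k-i-2}(x_{i+1},\dotsc,x_{k-1})=(x_{i+1}\dotsm x_{k-1})\bigl(2-x_i(x_i-2)S\bigr),
\]
where $S=\sum_{j=i+1}^{k-1}1/x_j$. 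Since the numerator of the first expression is $(x_i-2)x_i(x_{i+1}\dotsm x_{k-1})$, after cancelling the common factor $x_{i+1}\dotsm x_{k-1}$ the first expression equals $\dfrac{(x_i-2)x_i}{2-x_i(x_i-2)S}$. The identical computation with $x_{k-1}$ replaced by $x_{k-1}+2t$ shows the second expression equals $\dfrac{(x_i-2)x_i}{2-x_i(x_i-2)S'}$, where $S'=\sum_{j=i+1}^{k-2}1/x_j+\dfrac{1}{x_{k-1}+2t}$. Since $i\le k-2$, the term $1/x_{k-1}$ genuinely occurs in $S$, so $S-S'=1/x_{k-1}-1/(x_{k-1}+2t)>0$.

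Next I would verify that both of these denominators are positive. Rewritten by the same identities, the inequality $2-x_i(x_i-2)S>0$ is precisely statement~(\ref{item:inequality}) of Proposition~\ref{prop:expansionequivalence} for the pair $(i,k-1)$, which is a valid pair since $1\le i\le k-1\le m$; it holds because $x_1,\dotsc,x_m$ are the denominators of an odd greedy expansion, and it is valid for every $x_i$, in particular when $x_i=1$. For the primed denominator: if $x_i=1$ then $2-x_i(x_i-2)S'=2+S'>0$, while if $x_i\ge 3$ then $2-x_i(x_i-2)S'>2-x_i(x_i-2)S>0$ because $x_i(x_i-2)>0$ and $S'<S$.

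Finally I would subtract the two closed forms, obtaining
\[
\frac{(x_i-2)x_i}{2-x_i(x_i-2)S}-\frac{(x_i-2)x_i}{2-x_i(x_i-2)S'}=\frac{\bigl((x_i-2)x_i\bigr)^2(S-S')}{\bigl(2-x_i(x_i-2)S\bigr)\bigl(2-x_i(x_i-2)S'\bigr)}.
\]
This is strictly positive: the denominator is a product of two positive numbers by the previous paragraph, $S-S'>0$ by the first paragraph, and $\bigl((x_i-2)x_i\bigr)^2>0$ because $x_i$ is a positive odd integer and hence differs from both $0$ and~$2$. This gives the claimed strict inequality. I do not anticipate a genuine obstacle; the only points needing care are keeping track of which elementary symmetric polynomials collapse to full products (so that the closed form actually simplifies, including in the boundary case $i=k-2$) and noting that the term $1/x_{k-1}$ really is present in~$S$, which is exactly where the hypothesis $i\le k-2$ enters.
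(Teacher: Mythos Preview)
Your argument is correct and takes a genuinely different route from the paper's. The paper cross-multiplies the two fractions (after first checking both denominators are positive via Proposition~\ref{prop:expansionequivalence}), fully expands both products, and observes that after cancellation the difference is $-2t(x_i-2)^2(x_i\dotsm x_{k-2})^2<0$. You instead recognize that each fraction can be written with the \emph{same} numerator $(x_i-2)x_i$ after cancelling $x_{i+1}\dotsm x_{k-1}$, so the comparison reduces to comparing the denominators $2-x_i(x_i-2)S$ and $2-x_i(x_i-2)S'$, which in turn reduces to $S>S'$, i.e.\ $1/x_{k-1}>1/(x_{k-1}+2t)$. Your approach is more conceptual: it explains \emph{why} the inequality holds (the bound depends on $x_{k-1}$ only through the single reciprocal term $1/x_{k-1}$ in $S$, and increasing $x_{k-1}$ decreases that reciprocal), whereas the paper's expansion establishes the inequality without revealing this structure. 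The paper's approach, on the other hand, avoids the case split on $x_i=1$ versus $x_i\ge 3$ for the second positivity check by instead observing that $x_1,\dotsc,x_{k-2},x_{k-1}+2t$ is again an odd greedy expansion and reapplying Proposition~\ref{prop:expansionequivalence}(\ref{item:inequality}).
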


\begin{proof}
We first show that the denominators of the above fractions are both positive. Since $x_1,\dotsc,x_m$ are the denominators of an odd greedy expansion, Proposition~\ref{prop:expansionequivalence} gives that~\eqref{eq:x_kbound} always holds when $i<j\leq m$, where $j$ plays the role of~$k$ in Proposition~\ref{prop:expansionequivalence}. In particular, \eqref{eq:x_kbound} holds with $m$ replaced by $k-1$, so $x_1,\dotsc,x_{k-1}$ are also the denominators of an odd greedy expansion. Proposition \ref{prop:expansionequivalence}(\ref{item:inequality}) with $j=k-1$ then gives that 
\[
2\sigma_{k-i-1}(x_i,\dotsc, x_{k-1})-x_i^2\sigma_{k-i-2}(x_{i+1},\dotsc, x_{k-1})
\]
is positive. Moreover, since when $m=k-1$ we have that $x_{k-1}$ only appears on the left of~\eqref{eq:x_kbound}, this inequality still holds when $x_{k-1}$ is replaced by $x_{k-1}+2t$. Thus $x_1,\dotsc,x_{k-2},x_{k-1}+2t$ are the denominators of an odd greedy expansion as well. Proposition \ref{prop:expansionequivalence}(\ref{item:inequality}) with $j=k-1$ then gives that 
\[
2\sigma_{k-i-1}(x_i,\dotsc, x_{k-2},x_{k-1}+2t)-x_i^2\sigma_{k-i-2}(x_{i+1},\dotsc, x_{k-2},x_{k-1}+2t)
\]
is positive. 

Multiplying through by the denominators of the fractions in the statement, we see that it is sufficient to show
\begin{equation}\label{eq:crossproduct1}
\begin{split}
&(x_i-2)x_{i}\dotsm x_{k-1}\\
&\quad\cdot\bigl(2\sigma_{k-i-1}(x_i,\dotsc, x_{k-2},x_{k-1}+2t)-x_i^2\sigma_{k-i-2}(x_{i+1},\dotsc, x_{k-2},x_{k-1}+2t)\bigr)
\end{split}
\end{equation}
is greater than
\begin{equation}\label{eq:crossproduct2}
(x_i-2)x_{i}\dotsm x_{k-2}(x_{k-1}+2t)\bigl(2\sigma_{k-i-1}(x_i,\dotsc,x_{k-1})-x_i^2\sigma_{k-i-2}(x_{i+1},\dotsc, x_{k-1})\bigr).
\end{equation}
Expanding and distributing, we have that~\eqref{eq:crossproduct1} is equal to 
\[
\begin{split}
&2(x_i-2)(x_{i}\dotsm x_{k-2})^2x_{k-1}+2(x_i-2)x_{i}\dotsm x_{k-2}x_{k-1}^2\sigma_{k-i-2}(x_i,\dotsc, x_{k-2})\\
&\quad+4t(x_i-2)x_{i}\dotsm x_{k-1}\sigma_{k-i-2}(x_i,\dotsc, x_{k-2})-(x_i-2)x_i^3(x_{i+1}\dotsm x_{k-2})^2x_{k-1}\\
&\quad-(x_i-2)x_{i}^3x_{i+1}\dotsm x_{k-2}x_{k-1}^2\sigma_{k-i-3}(x_i,\dotsc, x_{k-2})\\
&\quad-2t(x_i-2)x_{i}^3x_{i+1}\dotsm x_{k-1}\sigma_{k-i-3}(x_i,\dotsc, x_{k-2}).
\end{split}
\]
Similarly, \eqref{eq:crossproduct2} is equal to 
\[
\begin{split}
&2(x_i-2)(x_{i}\dotsm x_{k-2})^2x_{k-1}+2(x_i-2)x_{i}\dotsm x_{k-2}x_{k-1}^2\sigma_{k-i-2}(x_i,\dotsc, x_{k-2})\\
&\quad+4t(x_i-2)(x_{i}\dotsm x_{k-2})^2+4t(x_i-2)x_{i}\dotsm x_{k-1}\sigma_{k-i-2}(x_i,\dotsc, x_{k-2})\\
&\quad-(x_i-2)x_i^3(x_{i+1}\dotsm x_{k-2})^2x_{k-1}\\
&\quad-(x_i-2)x_{i}^3x_{i+1}\dotsm x_{k-2}x_{k-1}^2\sigma_{k-i-3}(x_i,\dotsc, x_{k-2})-2t(x_i-2)x_i^3(x_{i+1}\dotsm x_{k-2})^2\\
&\quad-2t(x_i-2)x_{i}^3x_{i+1}\dotsm x_{k-1}\sigma_{k-i-3}(x_i,\dotsc, x_{k-2}).
\end{split}
\]
Comparing terms, we see that~\eqref{eq:crossproduct2} is~\eqref{eq:crossproduct1} plus
\[
4t(x_i-2)(x_{i}\dotsm x_{k-2})^2-2t(x_i-2)x_i^3(x_{i+1}\dotsm x_{k-2})^2
=-2t(x_i-2)^2(x_{i}\dotsm x_{k-2})^2.
\]
Since $t$ is positive and each $x_j$ is odd, this is always negative, so the result follows.
\end{proof}

Given compatible denominators $x_1,\dotsc,x_{m-2}$, we can now find an infinite family of rational numbers whose odd greedy expansion has length~$m$ and begins with denominators $x_1,\dotsc,x_{m-2}$.

\begin{theorem}\label{thm:fixedm-2}
Let $m\geq3$, and suppose $x_1,\dotsc,x_{m-2}$ are odd positive integers that satisfy~\eqref{eq:x_kbound} whenever $i<k\leq m-2$. Let $c_1$ be a nonnegative integer such that
\[
\frac{x_{m-2}^2+3}{2}-x_{m-2}+2c_1>\frac{(x_i-2)x_i\dotsm x_{m-2}}{2\sigma_{m-i-2}(x_i,\dotsc, x_{m-2})-x_i^2\sigma_{m-i-3}(x_{i+1},\dotsc, x_{m-2})}
\]
for all $i\in[m-3]$, let $b=(x_{m-2}^2+3)/2-x_{m-2}+2c_1$, and let $c_2$ be a nonnegative integer such that
\[
\frac{b^2+3}{2}-b+2c_2>\frac{(x_i-2)x_i\dotsm x_{m-2}b}{2\sigma_{m-i-1}(x_i,\dotsc, x_{m-2},b)-x_i^2\sigma_{m-i-2}(x_{i+1},\dotsc, x_{m-2},b)}
\]
for all $i\in[m-2]$. For all nonnegative integers $t_1$ and~$t_2$, the rational number 
\[
\frac{\sigma_{m-1}(x_1,\dotsc,x_m)}{x_1\dotsm x_m}
\]
with $x_{m-1}=b+2t_1$ and
\[
x_m=\frac{x_{m-1}^2+3}{2}-x_{m-1}+2c_2+2t_2,
\]
has odd greedy expansion of length~$m$ with denominators $x_1,\dotsc,x_m$.
\end{theorem}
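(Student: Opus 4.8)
The plan is to check, via Proposition~\ref{prop:expansionequivalence}, that the proposed list $x_1,\dotsc,x_m$ satisfies inequality~\eqref{eq:x_kbound} for all positive integers $i$ and~$k$ with $i<k\leq m$. The inequalities with $k\leq m-2$ hold by hypothesis, as they involve neither $x_{m-1}$ nor~$x_m$, so only the cases $k=m-1$ and $k=m$ require work. Before starting, I would record the needed parity and positivity: since $x_{m-2}$ is odd we have $x_{m-2}^2\equiv1\pmod 8$, so $(x_{m-2}^2+3)/2$ is even and $b$ is an odd positive integer; hence $x_{m-1}=b+2t_1$ is odd, and applying the same congruence to~$x_{m-1}$ shows $(x_{m-1}^2+3)/2$ is even, so that $x_m=(x_{m-1}^2+3)/2-x_{m-1}+2c_2+2t_2$ is an odd positive integer as well.

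For the case $k=m-1$, inequality~\eqref{eq:x_kbound} with $i<m-1$ asks that $x_{m-1}$ exceed a bound depending only on $x_i,\dotsc,x_{m-2}$. When $i=m-2$ this bound simplifies to $(x_{m-2}^2-2x_{m-2})/2$, exactly as in the proof of Corollary~\ref{cor:fixed1length2}, and since $b=(x_{m-2}^2-2x_{m-2})/2+3/2+2c_1$ we get $x_{m-1}\geq b>(x_{m-2}^2-2x_{m-2})/2$. When $i\leq m-3$, the hypothesis on~$c_1$ says precisely that $b$ exceeds the bound, so $x_{m-1}\geq b$ does too. Thus~\eqref{eq:x_kbound} holds for all $i<k\leq m-1$, and Proposition~\ref{prop:expansionequivalence} shows that, for every $t_1\geq0$, the list $x_1,\dotsc,x_{m-1}$ is already the denominator sequence of an odd greedy expansion of length $m-1$; in particular this applies when $t_1=0$, giving the list $x_1,\dotsc,x_{m-2},b$.

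For the case $k=m$, inequality~\eqref{eq:x_kbound} with $i=m-1$ asks $x_m>(x_{m-1}^2-2x_{m-1})/2$, which holds because $x_m=(x_{m-1}^2-2x_{m-1})/2+3/2+2c_2+2t_2$. For $i\leq m-2$, let $B_i(t_1)$ denote the right-hand side of~\eqref{eq:x_kbound} with $k=m$ and $x_{m-1}=b+2t_1$; I must show $x_m>B_i(t_1)$, which I would do by chaining three bounds. First, applying Theorem~\ref{thm:fixedm-1} to the length-$(m-1)$ odd greedy expansion $x_1,\dotsc,x_{m-2},b$ produces an honest odd greedy expansion of length~$m$ whose first $m-1$ denominators are $x_1,\dotsc,x_{m-2},b$, so Lemma~\ref{lem:decreasing} (applied to this expansion with the role of $x_{k-1}$ played by~$b$ and $t=t_1$) gives $B_i(0)>B_i(t_1)$ for $t_1\geq1$, with equality when $t_1=0$. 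Second, the hypothesis on~$c_2$ gives $B_i(0)<(b^2+3)/2-b+2c_2$. Third, because $((b+2t_1)^2+3)/2-(b+2t_1)-\bigl((b^2+3)/2-b\bigr)=2t_1(b+t_1-1)\geq0$ (recalling $b\geq1$), we have $x_m\geq(b^2+3)/2-b+2c_2+2t_2\geq(b^2+3)/2-b+2c_2$. Combining these, $x_m\geq(b^2+3)/2-b+2c_2>B_i(0)\geq B_i(t_1)$, as required.

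With~\eqref{eq:x_kbound} verified for all $i<k\leq m$, Proposition~\ref{prop:expansionequivalence} shows $x_1,\dotsc,x_m$ are the denominators of an odd greedy expansion, and since there are $m$ of them this expansion has length~$m$. The main obstacle is the case $k=m$ with $i\leq m-2$: here the lower bound on~$x_m$ genuinely depends on~$t_1$, and Lemma~\ref{lem:decreasing} is the tool that tames this dependence, but because that lemma is phrased for an honest length-$m$ odd greedy expansion one must first produce such an expansion with the correct initial segment, which is why Theorem~\ref{thm:fixedm-1} enters. Alternatively, one could note that the proof of Lemma~\ref{lem:decreasing} uses its hypothesis only to obtain positivity of two symmetric-polynomial expressions, and this positivity already follows from Proposition~\ref{prop:expansionequivalence}(\ref{item:inequality}) applied to the length-$(m-1)$ expansions $x_1,\dotsc,x_{m-2},b$ and $x_1,\dotsc,x_{m-2},b+2t_1$.
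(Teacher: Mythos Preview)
Your proof is correct and follows the same strategy as the paper's: verify~\eqref{eq:x_kbound} via Proposition~\ref{prop:expansionequivalence}, handling $k=m-1$ directly from the hypothesis on~$c_1$ and $k=m$ by combining the hypothesis on~$c_2$ with the monotonicity of $(x_{m-1}^2+3)/2-x_{m-1}$ and Lemma~\ref{lem:decreasing}. You are in fact more careful than the paper in several places---you check that $b$, $x_{m-1}$, $x_m$ are odd positive integers, you separate out the cases $i=m-2$ (for $k=m-1$) and $i=m-1$ (for $k=m$) that are not covered by the hypotheses on $c_1,c_2$, and you justify the application of Lemma~\ref{lem:decreasing} (whose stated hypothesis asks for a full length-$m$ expansion with $x_{m-1}=b$) by first producing one via Theorem~\ref{thm:fixedm-1}, whereas the paper simply invokes the lemma.
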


\begin{proof}
According to Proposition~\ref{prop:expansionequivalence}, we need to show that~\eqref{eq:x_kbound} holds for all positive integers $i$ and~$k$ with $i<k\leq m$. Since we have assumed~\eqref{eq:x_kbound} holds whenever $k\leq m-2$, it only remains to show that it also holds for $k=m-1$ and $k=m$.

We have 
\[
\begin{split}
x_{m-1}&=b+2t_1\\
&\geq\frac{x_{m-2}^2+3}{2}-x_{m-2}+2c_1\\
&>\frac{(x_i-2)x_i\dotsm x_{m-2}}{2\sigma_{m-i-2}(x_i,\dotsc, x_{m-2})-x_i^2\sigma_{m-i-3}(x_{i+1},\dotsc, x_{m-2})}
\end{split}
\]
for all $i\in[m-2]$, so~\eqref{eq:x_kbound} holds when $k=m-1$.

Since $(x_{m-1}^2+3)/2-x_{m-1}+2c_2$ is increasing in $x_{m-1}$ for $x_{m-1}\geq1$, we have 
\[
\begin{split}
x_m&=\frac{x_{m-1}^2+3}{2}-x_{m-1}+2c_2+2t_2\\
&\geq\frac{(b+2t_1)^2+3}{2}-(b+2t_1)+2c_2\\
&\geq\frac{b^2+3}{2}-b+2c_2\\
&>\frac{(x_i-2)x_i\dotsm x_{m-2}b}{2\sigma_{m-i-1}(x_i,\dotsc, x_{m-2},b)-x_i^2\sigma_{m-i-2}(x_{i+1},\dotsc, x_{m-2},b)}\\
&\geq\frac{(x_i-2)x_i\dotsm x_{m-1}}{2\sigma_{m-i-1}(x_i,\dotsc, x_{m-1})-x_i^2\sigma_{m-i-2}(x_{i+1},\dotsc, x_{m-1})},
\end{split}
\]
where the last inequality follows from Lemma~\ref{lem:decreasing}, Therefore~\eqref{eq:x_kbound} holds when $k=m$, completing the proof.
\end{proof}

We now record the special case of Theorem~\ref{thm:fixedm-2} when $m=3$.

\begin{corollary}\label{cor:fixed1length3}
Let $x_1$ be an odd positive integer, let $b=(x_{1}^2+3)/2-x_{1}$, and let $c_2$ be a nonnegative integer such that
\[
\frac{b^2+3}{2}-b+2c_2>\frac{(x_1-2)x_1b}{2(x_1+b)-x_1^2}.
\]
For all nonnegative integers $t_1$ and~$t_2$, the rational number 
\[
\frac{\sigma_{2}(x_1,x_2,x_3)}{x_1x_2x_3}
\]
with $x_{2}=b+2t_1$ and
\[
x_3=\frac{x_{2}^2+3}{2}-x_{2}+2c_2+2t_2,
\]
has odd greedy expansion of length~$3$ with denominators $x_1,x_2,x_3$.
\end{corollary}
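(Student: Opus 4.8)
The plan is to specialize Theorem~\ref{thm:fixedm-2} to the case $m=3$ and verify that its general hypotheses collapse to the ones stated in the corollary. With $m=3$ we have $m-2=1$, so the only fixed denominator is $x_1$, and the requirement that $x_1,\dotsc,x_{m-2}$ satisfy~\eqref{eq:x_kbound} whenever $i<k\leq m-2$ is vacuous, as there are no integers $i$ and~$k$ with $i<k\leq 1$. Similarly, the condition on~$c_1$ in Theorem~\ref{thm:fixedm-2} is indexed by $i\in[m-3]=[0]=\emptyset$, so it too is vacuous; hence we may take $c_1=0$, which yields $b=(x_{m-2}^2+3)/2-x_{m-2}=(x_1^2+3)/2-x_1$, exactly as in the statement.

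Next I would unwind the condition on~$c_2$. For $m=3$ the index~$i$ ranges over $[m-2]=[1]$, so the only inequality is the one with $i=1$. There the product $x_i\dotsm x_{m-2}$ is just~$x_1$, the term $\sigma_{m-i-1}(x_i,\dotsc,x_{m-2},b)$ equals $\sigma_1(x_1,b)=x_1+b$, and the term $\sigma_{m-i-2}(x_{i+1},\dotsc,x_{m-2},b)$ equals $\sigma_0(b)=1$ because the list $x_{i+1},\dotsc,x_{m-2}$ is empty. So the right-hand side becomes $(x_1-2)x_1b/\bigl(2(x_1+b)-x_1^2\bigr)$, precisely the bound appearing in the corollary.

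Finally I would apply the conclusion of Theorem~\ref{thm:fixedm-2} directly: for all nonnegative integers $t_1$ and~$t_2$, the rational number $\sigma_{m-1}(x_1,\dotsc,x_m)/(x_1\dotsm x_m)=\sigma_2(x_1,x_2,x_3)/(x_1x_2x_3)$ with $x_{m-1}=x_2=b+2t_1$ and $x_m=x_3=(x_2^2+3)/2-x_2+2c_2+2t_2$ has odd greedy expansion of length~$3$ with denominators $x_1,x_2,x_3$. There is essentially no obstacle here; the only points requiring a little care are recognizing the empty index ranges and correctly evaluating the low-degree elementary symmetric polynomials $\sigma_0$ and~$\sigma_1$.
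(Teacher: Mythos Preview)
Your proposal is correct and follows essentially the same approach as the paper's own proof: specialize Theorem~\ref{thm:fixedm-2} to $m=3$, observe that the hypotheses on $x_1,\dotsc,x_{m-2}$ and on~$c_1$ become vacuous (so one may take $c_1=0$), and note that the condition on~$c_2$ reduces to the single inequality with $i=1$. Your write-up is simply more explicit than the paper's terse version in spelling out the empty index ranges and the evaluations of $\sigma_0$ and~$\sigma_1$.
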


\begin{proof}
When $m=3$, we can take $c_1=0$ in Theorem~\ref{thm:fixedm-2} so that $b=(x_{1}^2+3)/2-x_{1}$ and the only condition on~$c_2$ is when $i=1$. 
\end{proof}

We give two examples.

\begin{example}\label{ex:fixed1length3a}
Suppose $m=3$ and $x_1=5$. In Corollary~\ref{cor:fixed1length3}, we have
\[
b=\frac{x_1^2+3}{2}-x_1=\frac{5^2+3}{2}-5=9.
\]
We also have
\[
\frac{b^2+3}{2}-b=\frac{9^2+3}{2}-9=33
\]
and
\[
\frac{(x_1-2)x_1b}{2(x_1+b)-x_1^2}=\frac{(5-2)5\cdot9}{2(5+9)-5^2}=45.
\]
Therefore we can take $c_2=7$. We then have $x_{2}=9+2t_1$ and
\[
\begin{split}
x_3&=\frac{x_{2}^2+3}{2}-x_{2}+2c_2+2t_2\\
&=\frac{(9+2t_1)^2+3}{2}-(9+2t_1)+14+2t_2\\
&=47+16t_1+2t_1^2+2t_2.
\end{split}
\]
Corollary~\ref{cor:fixed1length3} then gives that, for all nonnegative integers $t_1$ and~$t_2$, 
\[
\begin{split}
\frac{\sigma_2(x_1,x_2,x_3)}{x_1x_2x_3}&=\frac{\sigma_2(5,9+2t_1,47+16t_1+2t_1^2+2t_2)}{5(9+2t_1)(47+16t_1+2t_1^2+2t_2)}\\
&=\frac{703+328t_1+60t_1^2+4t_1^3+4t_1t_2+28t_2}{2115+1190t_1+250t_1^2+20t_1^3+20t_1t_2+90t_2}
\end{split}
\]
has odd greedy expansion of length~$3$ with denominators $x_1,x_2,x_3$. In the special case when $t_1=0$, we recover the family of fractions from Example~\ref{ex:fixed2length3}.
\end{example}

\begin{example}\label{ex:fixed1length3b}
Suppose $m=3$ and $x_1=3$. In Corollary~\ref{cor:fixed1length3}, we have
\[
b=\frac{x_1^2+3}{2}-x_1=\frac{3^2+3}{2}-3=3.
\]
We also have
\[
\frac{b^2+3}{2}-b=\frac{3^2+3}{2}-3=3
\]
and
\[
\frac{(x_1-2)x_1b}{2(x_1+b)-x_1^2}=\frac{(3-2)3\cdot3}{2(3+3)-3^2}=3.
\]
Therefore we can take $c_2=1$. We then have $x_{2}=3+2t_1$ and
\[
\begin{split}
x_3&=\frac{x_{2}^2+3}{2}-x_{2}+2c_2+2t_2\\
&=\frac{(3+2t_1)^2+3}{2}-(3+2t_1)+2+2t_2\\
&=5+4t_1+2t_1^2+2t_2.
\end{split}
\]
Corollary~\ref{cor:fixed1length3} then gives that, for all nonnegative integers $t_1$ and~$t_2$, 
\[
\begin{split}
\frac{\sigma_2(x_1,x_2,x_3)}{x_1x_2x_3}&=\frac{\sigma_2(3,3+2t_1,5+4t_1+2t_1^2+2t_2)}{3(3+2t_1)(5+4t_1+2t_1^2+2t_2)}\\
&=\frac{39+40t_1+20t_1^2+4t_1^3+4t_1t_2+12t_2}{45+66t_1+42t_1^2+12t_1^3+12t_1t_2+18t_2}
\end{split}
\]
has odd greedy expansion of length~$3$ with denominators $x_1,x_2,x_3$. In the special case when $t_1=1$, this becomes
\[
\frac{103+16t_2}{165+30t_2}.
\]

This agrees with Example~\ref{ex:fixed2length3a} by letting $t=t_2+1$, except that the fraction $87/135$ arises in Example~\ref{ex:fixed2length3a} but not here. Unlike in Section~\ref{sec:fixedm-1}, where we found \emph{all} rational numbers whose odd greedy expansion has length~$m$ and begins with denominators $x_1,\dotsc,x_{m-1}$, in this section we may not find all rational numbers whose odd greedy expansion has length~$m$ and begins with denominators $x_1,\dotsc,x_{m-2}$. 
\end{example}

\section*{Acknowledgments}

We would like to thank Nathan Kaplan for suggesting we study the odd greedy algorithm and Claire Levaillant for helpful conversations. Joel Louwsma was partially supported by a Niagara University Summer Research Award. Joseph Martino was partially supported by a Niagara University Undergraduate Student--Faculty Research Collaboration Award.

\bibliographystyle{amsplain}
\bibliography{OddGreedyLength.bib}

\end{document}